\def\Ups{{\small {\Upsilon}}}
\def\nn{\mathbb N}
\def\ga{\gamma}
\def\al{\alpha}
\def\CR{\mathcal L}
\def\<{\langle}
\def\>{\rangle}
\def\La{\Lambda}
\def\RR{\text{\sc R}}
\def\rH{ {\text {\rm H}  } }
\def\di{\diamond}
\def\Ups{\Upsilon}
\def\lra{\longrightarrow}
\def\.{\hskip.06cm}
\def\ts{\hskip.03cm}
\def\fr{f}
\title[Asymptotics of principal evaluations of Schubert polynomials]{Asymptotics of principal evaluations
of Schubert polynomials for layered permutations}
\author[Alejandro Morales, Igor Pak, Greta Panova]{Alejandro H.~Morales$^\star$,
\ \ Igor Pak$^\di$, \ and \ \ Greta Panova$^\dagger$}
\thanks{\today}
\thanks{\thinspace ${\hspace{-.45ex}}^\star$Department of Mathematics
  and Statistics,
UMass, Amherst, MA.
\hskip.06cm
Email:
\hskip.06cm
\texttt{ahmorales@math.umass.edu}}
\thanks{\thinspace ${\hspace{-.45ex}}^\di$Department of Mathematics,
UCLA, Los Angeles, CA.
\hskip.06cm
Email:
\hskip.06cm
\texttt{pak@math.ucla.edu}}
\thanks{\thinspace ${\hspace{-.45ex}}^\dagger$Institute of Advanced Studies,
Princeton, NJ;
Department of Mathematics,
 UPenn, Philadelphia, PA}
\thanks{\hskip.1cm
Email:
\hskip.06cm
\texttt{panova@math.upenn.edu}}
\definecolor{darkgreen}{rgb}{0,0.7,0}
\definecolor{purplish}{rgb}{0.5,0,0.8}
\declaretheorem[numberwithin=section]{theorem}
\declaretheorem[numberlike=theorem]{lemma}
\declaretheorem[numberlike=theorem]{proposition}
\declaretheorem[numberlike=theorem]{conjecture}
\declaretheorem[numberlike=theorem, style=definition]{remark}
\numberwithin{equation}{section} 
\begin{document}

\maketitle

\begin{abstract}
Denote by $u(n)$ the largest principal specialization of the Schubert polynomial:
$$
u(n) \, := \, \max_{w \in S_n} \. \mathfrak{S}_w(1,\ldots,1)
$$
Stanley conjectured in~\cite{St} that there is a limit
$$\lim_{n\to \infty} \, \frac{1}{n^2} \. \log u(n),
$$
and asked for a limiting description of permutations achieving
the maximum~$u(n)$.  Merzon and Smirnov conjectured in~\cite{MeS}
that this maximum is achieved on layered permutations.  We resolve
both Stanley's problems restricted to layered permutations.
\end{abstract}

\vskip.6cm

\section{Introduction}

Understanding the large-scale behavior of combinatorial objects is
so fundamental to modern combinatorics, that it has become routine and
no longer requires justification.  However, in \emph{algebraic combinatorics},
there are fewer results in this direction, as the objects tend to be have
more structure and thus less approachable.  This paper studies the
asymptotic behavior of the
principal evaluation of Schubert polynomials, partially resolving
an open problem by Stanley~\cite{St}. As the reader shall see,
the results are surprisingly precise.

\smallskip

\subsection*{Main results}
\emph{Schubert polynomials} \ts
$\mathfrak{S}_w(x_1,\ldots,x_n) \in \nn[x_1,\ldots,x_n]$, \ts $w\in S_n$,
were introduced by Lascoux and Sch\"utzenberger \cite{LSch} to study
Schubert varieties. They have been intensely studied
in the last two decades and remain a central object in algebraic combinatorics.
 The principle evaluation of the Schubert polynomials
can be defined via \emph{Macdonald's identity} \cite[Eq.~6.11]{Mac}:
\begin{equation} \label{eq:macdonald}
\Ups_w \, := \, \mathfrak{S}_w(1,\ldots,1) \, = \,
\frac{1}{\ell!} \. \sum_{(a_1,\ldots,a_{\ell}) \in \RR(w)} \. a_1 \ts \cdots \ts a_{\ell}\..
\end{equation}
Here $\ell=\ell(w)$ is the \emph{length} of $w$ (the number of inversions, and
\ts $\RR(w)$ \ts denotes the set of \emph{reduced words} of $w\in S_n\ts$: \ts tuples
\ts $(a_1,\ldots,a_{\ell})$ \ts  such that \ts $s_{a_1}\cdots
s_{a_{\ell}}$ \ts is a reduced decomposition of $w$ into simple
transpositions $s_i = (i,i+1)$.

Note that $\Ups_w$ has a more direct (but less symmetric) combinatorial
interpretation as the number of certain \emph{rc-graphs}
(also called \emph{pipe dreams}), see e.g.~\cite{As}.  In particular,
we have $\Ups_w\in \nn$, even though this is not immediately apparent
from~\eqref{eq:macdonald} (cf.~$\S$\ref{ss:finrem-rc}).


Denote by $u(n)$ the largest principal specialization of the Schubert polynomial:
$$
u(n) \, := \, \max_{w \in S_n} \. \Ups_w\,.
$$

\begin{conjecture}[Stanley \cite{St}] \label{conj:stan}
There is a limit
$$\lim_{n\to \infty} \,\. \frac{1}{n^2} \, \log u(n)\ts.
$$
\end{conjecture}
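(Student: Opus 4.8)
The plan is to combine a cheap superadditivity bound, which gives one direction for free, with a detailed analysis of layered permutations, which carries the real content.

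\emph{Superadditivity.} For $w\in S_m$ and $v\in S_n$ let $w\oplus v\in S_{m+n}$ be the direct sum. One has $\ell(w\oplus v)=\ell(w)+\ell(v)$; the transposition $s_m$ never occurs in a reduced word of $w\oplus v$; and the generators $s_1,\dots,s_{m-1}$ from $w$ commute with $s_{m+1},\dots,s_{m+n-1}$ coming from $v$ shifted up by $m$. Hence $\RR(w\oplus v)$ is exactly the set of shuffles of a word $\mathbf a\in\RR(w)$ with a word $\mathbf b\in\RR(v)$ whose letters are increased by $m$, and the product of the letters of a shuffled word equals $\prod_i a_i\cdot\prod_j(b_j+m)$ regardless of the shuffle. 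So \eqref{eq:macdonald} gives
$$
\Ups_{w\oplus v}\ =\ \frac{1}{\ell(w)!\,\ell(v)!}\,\Bigl(\,\sum_{\mathbf a\in\RR(w)}\prod_i a_i\Bigr)\Bigl(\,\sum_{\mathbf b\in\RR(v)}\prod_j(b_j+m)\Bigr)\ \ge\ \Ups_w\,\Ups_v,
$$
whence $u(m+n)\ge u(m)\,u(n)$. Together with the crude bound $u(n)\le 2^{\binom n2}$ (a reduced pipe dream of $w\in S_n$ is a subset of the staircase), this makes $a_n:=\log u(n)$ superadditive with $0\le a_n\le\binom n2\log 2$. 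But Fekete's lemma only yields $\lim_n a_n/n=\sup_n a_n/n=+\infty$, which says nothing about $a_n/n^2$: a superadditive sequence with $a_n=O(n^2)$ need not have $a_n/n^2$ convergent. Supplying the missing rigidity at scale $n^2$ is the real problem, and for general $w$ I do not see how to do it — one would need structural control on the permutations nearly attaining $u(n)$.

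\emph{Reduction to layered permutations.} Following Merzon and Smirnov's conjecture (assumed here, or taken as the definition of Stanley's problem in its restricted form), it suffices to take $w$ layered, $w=\delta_{\la_1}\oplus\cdots\oplus\delta_{\la_k}$ with $\delta_m\in S_m$ the longest element and $\la$ a composition of $n$. Iterating the displayed identity and using $\Ups_{\delta_m}=1$, $\ell(\delta_m)=\binom m2$, one obtains the product formula
$$
\Ups_w\ =\ \prod_{i=1}^{k}\ \frac{1}{\binom{\la_i}{2}!}\sum_{\mathbf a\in\RR(\delta_{\la_i})}\ \prod_{j}\bigl(a_j+\la_1+\cdots+\la_{i-1}\bigr),
$$
so that the restricted problem becomes the explicit optimization $\max_{\la\,\vDash\,n}F(\la)$ with $F$ the right-hand side. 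Two test cases already show the answer is delicate: $k=1$ gives $\Ups_w=1$, and $\la=(2,2,\dots,2)$ gives $\Ups_w=(n-1)!!$; both have $\log\Ups_w=o(n^2)$, so the optimal blocks must be of intermediate, linearly growing size.

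\emph{Solving the optimization.} Set $P_m(t):=\tfrac1{\binom m2!}\sum_{\mathbf a\in\RR(\delta_m)}\prod_j(a_j+t)$, a polynomial of degree $\binom m2$ in $t$ with $P_m(0)=1$ and top coefficient $|\RR(\delta_m)|/\binom m2!=1/\prod_{k=1}^{m-1}(2k-1)!!$ (the hook-length evaluation of the number of staircase tableaux). The input one needs is the logarithmic asymptotics $\Psi(\rho):=\lim_{m\to\infty}\tfrac1{m^2}\log P_m(\rho m)$ for each fixed $\rho\ge0$ — itself a limit-shape-type computation over the rescaled staircase, with $\Psi(0)=0$ and $\Psi(\rho)=\tfrac12\log\rho+O(1)$ as $\rho\to\infty$. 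Writing $\la_i=\beta_i n$ and $\sigma_i=\beta_1+\cdots+\beta_i$, one then gets $\tfrac1{n^2}\log\Ups_w=\sum_i\beta_i^{\,2}\,\Psi(\sigma_{i-1}/\beta_i)+o(1)$, uniformly enough, and
$$
c\ :=\ \lim_{n\to\infty}\tfrac1{n^2}\log u(n)\ =\ \sup_{k\ge1}\ \max_{\beta_1+\cdots+\beta_k=1,\ \beta_i>0}\ \sum_{i=1}^{k}\beta_i^{\,2}\,\Psi\!\Bigl(\tfrac{\beta_1+\cdots+\beta_{i-1}}{\beta_i}\Bigr)\ <\ \tfrac{\log 2}{2}.
$$
The maximizing $\beta^{\ast}$ (or its limit as $k\to\infty$) is the limiting ``layer profile'', answering Stanley's second question for layered permutations. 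The main obstacle is exactly this last step: identifying $\Psi$ in closed form and solving the constrained maximization explicitly, the delicate point being uniform control of $\log P_m(t)$ across the whole relevant range, from $t=\Theta(m)$ to $t=\Theta(n)$, where the shift $t$ is not negligible against $m$. At the level of Stanley's conjecture in full, the genuine gap — and the reason only the layered case is within reach — is the one flagged in the superadditivity step: ruling out that some non-layered family of permutations overtakes the layered optimum.
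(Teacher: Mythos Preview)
The statement you are asked to prove is a \emph{conjecture} in the paper --- it is not proved there, and as the final remarks (\S\ref{ss:finrem-limit}) state, it remains open. What the paper does prove is the restricted version (Theorem~\ref{thm:main-limit}): the limit exists for $v(n)=\max_{w\in\CR_n}\Ups_w$, the maximum over layered permutations. You correctly identify this distinction, and your write-up is an honest discussion of obstacles rather than a proof; your own conclusion is that the layered case is within reach and the full conjecture is not. So on the headline statement you and the paper agree: no proof.

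For the layered case, your outline is on the right track but is missing the one input that makes it go through. You define $P_m(t)$ via Macdonald's identity and pose the computation of $\Psi(\rho)=\lim m^{-2}\log P_m(\rho m)$ as an unresolved ``limit-shape-type'' problem. In fact $P_m(t)$, for nonnegative integer $t$, is exactly $\Ups_{1^t\times\delta_m}=F(t,m)$, and by Fomin--Kirillov and Proctor (Theorem~\ref{thm:FK}) this has the closed product form $\prod_{1\le i<j\le m}\frac{2t+i+j-1}{i+j-1}$. The paper exploits this directly: $\log F(m,n-m)$ is a double sum which is approximated by an explicit double integral, giving $\log F(m,n-m)=n^2 \fr(m/n)+O(n)$ with the error uniform in $m$ (Proposition~\ref{p:bdF}). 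The ``delicate point'' you flag --- uniform control of $\log P_m(t)$ across all scales --- drops out from elementary convexity and monotonicity of the integrand once the product formula is in hand. Without that formula, your $\Psi$ is genuinely hard to pin down.

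The second difference is architectural. You cast the optimization as $\sup_k\max_{\beta_1+\cdots+\beta_k=1}\sum_i\beta_i^2\Psi(\sigma_{i-1}/\beta_i)$; since the optimal profile turns out to have infinitely many layers (geometric, $\beta_i\propto\alpha^{i}$), the $\sup_k$ is not attained and one must handle the $k\to\infty$ limit carefully. The paper sidesteps this: from the recursion $v(n)=\max_p\{v(n-p)F(n-p,p)\}$ and the uniform $O(n)$ bound on $\log F$, one seeks a constant $\gamma$ with $\max_{x\in[0,1)}(\fr(x)+\gamma x^2)=\gamma$ (Lemma~\ref{lemma:max_r}), and then a short induction on $n$ gives $|\log v(n)-\gamma n^2|\le 4n$ (Lemma~\ref{l:limit}). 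This fixed-point condition on $\gamma$ encodes the self-similarity of the geometric profile and makes the infinite-layer issue disappear; it is cleaner than the variational route you propose.
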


In addition, Stanley asked whether the permutations $w$ in $S_n$
achieving the maximum \ts $\Ups_w = u(n)$ \ts had a limiting description.
There was some evidence in favor of this (see below), but before we turn
to positive results let us put this conjecture into context.

One can think of $\Ups_w$ as a statistical sum of
\emph{weighted random sorting networks} of
the permutation~$w$.  From a combinatorial point of view, this
is a more natural notion, since e.g. $\Ups_{w_0}=1$, where
$w_0=(n,n-1,\ldots,1)$ is the permutation with maximal length
\ts $\ell(w_0)=\binom{n}{2}$. It is thus natural to expect
$u(n)$ to have nice asymptotic behavior.  In fact, Stanley
gave the first order of asymptotics for~$u(n)$:

\begin{theorem}[Stanley \cite{St}] \label{thm:bound_u(n)}
\begin{equation} \label{eq:bounds-stanley}
\frac{1}{4} \, \leq \, \liminf_{n\to\infty} \. \frac{\log_2 u(n)}{n^2} \, \leq \,
\limsup_{n\to\infty} \. \frac{\log_2 u(n)}{n^2} \, \leq \, \frac{1}{2}\..
\end{equation}
\end{theorem}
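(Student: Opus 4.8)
I would prove the two bounds independently.

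\emph{Upper bound.} Use the rc-graph (pipe dream) description of $\Ups_w$ recalled above: $\Ups_w$ is the number of reduced pipe dreams of $w$, each of which is a subset of the $\binom{n}{2}$ cells $\{(i,j):i,j\ge 1,\ i+j\le n\}$ of the staircase triangle. Hence $\Ups_w\le 2^{\binom{n}{2}}$ for all $w\in S_n$, so $u(n)\le 2^{\binom{n}{2}}$ and $\limsup_{n}\,n^{-2}\log_2 u(n)\le\lim_{n}\,n^{-2}\binom{n}{2}=\tfrac12$.

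\emph{Lower bound.} The plan is to find a good family among the \emph{layered} permutations $w=w_0^{(\lambda_1)}\oplus\cdots\oplus w_0^{(\lambda_k)}$. Put $p_i=\lambda_1+\cdots+\lambda_{i-1}$. The first step is the factorization
$$
\Ups_w \;=\; \prod_{i=1}^{k} f_{\lambda_i}(p_i),
$$
where $f_\lambda(p):=\Ups_{\,\id_p\oplus w_0^{(\lambda)}}=\tfrac1{\binom{\lambda}{2}!}\sum_{a\in\RR(w_0^{(\lambda)})}\prod_j(a_j+p)$. This follows from Macdonald's identity \eqref{eq:macdonald}: the simple transpositions used by distinct blocks have pairwise non-adjacent indices, hence commute, so every reduced word of $w$ is an arbitrary shuffle of (shifted) reduced words of the individual blocks; the multinomial shuffle count cancels the $\ell!$ of \eqref{eq:macdonald} and the product over letters factors block by block. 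I would also record that $\id_p\oplus w_0^{(\lambda)}$ is vexillary, its Rothe diagram being a copy of the staircase partition $(\lambda{-}1,\lambda{-}2,\dots,1)$ placed in rows $p{+}1,\dots,p{+}\lambda{-}1$; hence $f_\lambda(p)$ is the number of semistandard Young tableaux of staircase shape $(\lambda{-}1,\dots,1)$ whose entries in row $s$ are at most $p+s$, and in particular $f_\lambda(p)\ge s_{(\lambda-1,\dots,1)}(1^p)$, which the hook--content formula evaluates explicitly.

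The second step is to choose the layers --- the simplest choice being all $\lambda_i$ equal to $\lambda=\lfloor\alpha n\rfloor$ --- so that $\log_2\Ups_w=\sum_{0\le r<\lfloor 1/\alpha\rfloor}\log_2 f_\lambda(r\lambda)$, to show $\log_2 f_\lambda(r\lambda)=\bigl(c(r)+o(1)\bigr)\lambda^2$ for an increasing function $c$, and then to sum over $r$ and optimize over $\alpha$, checking that the optimal value is at least $\tfrac14$. I expect the crux to be a sufficiently sharp lower bound for $f_\lambda(r\lambda)$ when $r$ is small, i.e.\ when the offset $p$ is comparable to the layer size $\lambda$. The cheap estimate $f_\lambda(p)\ge\tfrac{|\RR(w_0^{(\lambda)})|}{\binom{\lambda}{2}!}(p+1)^{\binom{\lambda}{2}}$ --- equivalently $f_\lambda(p)\ge s_{(\lambda-1,\dots,1)}(1^p)$ --- is asymptotically correct only for $p\gg\lambda$, and after the optimization it falls just short of $\tfrac14$; one must exploit the row-dependent bounds $p+s$ beyond the uniform bound $p$, i.e.\ count semistandard tableaux of a staircase with linearly growing row flags (equivalently, up to lower-order terms, bounded plane partitions of staircase shape). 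This is where the asymptotic statistics of random reduced words of $w_0^{(\lambda)}$ enter: the hook-length formula counts these words but does not directly control the biased sum defining $f_\lambda$. Making $c(r)$ precise and solving the resulting one-variable optimization in $\alpha$ is exactly the analysis carried out in this paper for general layered permutations; it recovers Stanley's bounds and, moreover, determines the analogous limit with the maximum taken over layered permutations.
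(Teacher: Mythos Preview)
Your upper bound is correct and is essentially the same as (indeed slightly cleaner than) what the paper attributes to Stanley: the paper notes that Stanley's bounds come from the Cauchy identity for Schubert polynomials, $\sum_{w_0=v^{-1}u}\Ups_u\Ups_v=2^{\binom{n}{2}}$, from which $\Ups_w\le 2^{\binom{n}{2}}$ follows since $\Ups_{v}\ge 1$; your pipe-dream count gives the same inequality directly.

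Your lower bound, however, is a genuinely different route from the one the paper cites, and as written it has a gap. Stanley's argument (see \S\ref{ss:finrem-rc} of the paper) is nonconstructive and takes two lines: the Cauchy identity above has $n!$ terms, so by pigeonhole some term satisfies $\Ups_u\Ups_v\ge 2^{\binom{n}{2}}/n!$; since both factors are at most $u(n)$ this gives $u(n)^2\ge 2^{\binom{n}{2}}/n!$, hence $\liminf_n n^{-2}\log_2 u(n)\ge \tfrac14$. No permutation needs to be exhibited.

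You instead try to \emph{construct} a good $w$ among layered permutations with equal blocks. Your factorization $\Ups_w=\prod_i f_{\lambda}(p_i)$ is correct (it is Proposition~\ref{prop:red} of the paper), and your identification of $f_\lambda(p)$ with a flagged-tableaux/plane-partition count is essentially Theorem~\ref{thm:FK}. But you then concede that the ``cheap estimate'' $f_\lambda(p)\ge s_{(\lambda-1,\dots,1)}(1^p)$, after optimizing over $\alpha$, ``falls just short of $\tfrac14$'', and you defer the missing piece to ``the analysis carried out in this paper''. That analysis is Theorem~\ref{thm:main-limit}, which indeed gives $\approx 0.2932>\tfrac14$, but it requires the sharp two-sided estimate of Proposition~\ref{p:bdF} and the optimization of Lemma~\ref{lemma:max_r}; invoking it to prove the weaker $\tfrac14$ bound is considerable overkill, and more to the point your write-up does not actually carry it out. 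So as a self-contained proof of the lower bound your proposal is incomplete: either supply the full asymptotic computation of $c(r)$ and the optimization over $\alpha$ (in which case you will have reproved much of the paper), or replace the constructive attempt by the one-line Cauchy/pigeonhole argument above.
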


Stanley's proof is nonconstructive and based on the
\emph{Cauchy identity} for Schubert polynomials, see~\cite[Prop.~2.4.7]{Man}.
The first constructive lower bound was given by the authors in \cite[$\S$6]{MPP},
where the asymptotics of $\Ups_w$ was computed for several families of permutations.
Notably, for a permutation
$$w(b,n-b) \. := \. \bigl(b, \ts b-1, \ts \ldots\ts ,\ts 1, \, n, \ts n-1, \ts \ldots\ts ,
\ts b+1\bigr) \quad \text{where} \ \ b\ts =\ts \frac{n}3\.,
$$
we showed that
$$
\frac{1}{n^2} \ts \log_2 \Ups_{w(b,n-b)} \, \lra \, C \. \approx \.
0.25162 \quad \text{as} \ \ \, n\to \infty\ts.
$$
In fact, it is easy to see that the limit $C$ is the largest limit value
over all ratios $0< b/n<1$.  This also gives a small improvement on the
lower bound in Stanley's theorem.

\emph{Layered permutations} $w(b_k,\ldots,b_1)$ are defined as
\[
w(b_k,b_{k-1},\ldots,b_1) \. := \. \bigl(b_k,b_k-1,\ldots, 1,
b_k+b_{k-1}, b_k+b_{k-1}-1,\ldots, b_k+1,\. \ldots \. , n,\ldots , \ts n-b_1+1\bigr),
\]
for integers \ts $b_1+\ldots+b_{k-1}+b_k =n$.  They are also called
\emph{Richardson} and \emph{pop-stack sortable permutations} in a different
contexts, see e.g.~\cite[$\S$2.1.4]{Kit} and~\cite{MeS}.
Denote by $\CR_n$ the set of layered permutations $w\in S_n$.

\smallskip

\begin{theorem} \label{thm:main-limit}
Let
$$
v(n) \, := \, \max_{w \in \CR_n} \. \Ups_w\ts.
$$
Then there is a limit
$$\lim_{n\to \infty} \,\. \frac{1}{n^2} \, \log_2 v(n) \, = \, \frac{ \gamma}{\log 2} \, \approx \, 0.2932362762\ts,
$$
where \ts $\gamma \approx 0.2032558981$ \ts is a universal constant.  Moreover,
the maximum value $v(n)$ is achieved at a layered permutation
$$
w(\,\ldots,\ts b_2,\ts b_1), \quad \text{where} \quad \ b_i \. \sim\.
\al^{i-1}(1-\al)\ts n \quad \text{as} \ \ \. n\to \infty,
$$
for every fixed~$i$, and where \ts $\al \approx 0.4331818312$ \ts is a universal constant.
\end{theorem}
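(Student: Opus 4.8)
The plan is to reduce $\Ups_w$ for a layered permutation to a product over its blocks, to evaluate each factor by a plane‑partition formula, to pass to a continuous limit, and then to solve a one‑variable optimization.

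\emph{Step 1 (a product formula).} A layered permutation is a direct sum $w(b_k,\ldots,b_1)=w_0^{(b_k)}\oplus\cdots\oplus w_0^{(b_1)}$ of longest elements. Since the simple transpositions appearing in distinct blocks pairwise commute, $\RR\bigl(w(b_k,\ldots,b_1)\bigr)$ is exactly the set of shuffles of reduced words of the blocks, with the $j$‑th block contributing a reduced word of $w_0^{(b_j)}$ in which every letter is raised by $P_j:=b_{j+1}+\cdots+b_k$. Substituting into Macdonald's identity~\eqref{eq:macdonald}, the identity $\tfrac1{\ell!}\binom{\ell}{\ell_1,\ldots,\ell_k}=\tfrac1{\ell_1!\cdots\ell_k!}$ with $\ell_j=\binom{b_j}{2}$ makes the multinomial factor cancel and the sum factor, yielding
\[
\Ups_{w(b_k,\ldots,b_1)}\;=\;\prod_{j=1}^{k}F(b_j,P_j),\qquad F(m,p)\;:=\;\Ups_{\,\id_p\oplus w_0^{(m)}}\;=\;\frac{1}{\binom{m}{2}!}\sum_{\rho\in\RR(w_0^{(m)})}\ \prod_{t}\bigl(\rho_t+p\bigr).
\]
As $P_j$ is independent of $b_1$, this is equivalent to the recursion $v(n)=\max_{1\le b\le n}F(b,n-b)\,v(n-b)$, which governs both the limit and the shape of the optimizers.

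\emph{Step 2 (evaluating $F$).} The permutation $\id_p\oplus w_0^{(m)}$ is vexillary (it avoids $2143$), with associated partition the staircase $(m-1,m-2,\ldots,1)$ and flag $f_i=p+i$; hence $F(m,p)$ counts flagged semistandard tableaux of staircase shape with row $i$ bounded by $p+i$. Subtracting $i$ from every entry of row $i$ turns these into reverse plane partitions of the staircase with entries in $\{0,1,\ldots,p\}$, i.e.\ order‑preserving maps from the staircase poset to a chain; the corresponding hook‑content evaluation for the staircase — available through the $d$‑complete/minuscule calculus, through a bijection with symmetric plane partitions, or directly from a Lindström--Gessel--Viennot determinant — gives
\[
F(m,p)\;=\;\prod_{1\le i\le j\le m-1}\frac{2p+i+j}{i+j}\,,
\]
consistent with $F(m,0)=1$, $F(2,p)=p+1$, and $F(m,1)$ being the Catalan number $C_m$, the number of Young diagrams inside the staircase. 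Taking logarithms and setting $m=\beta n$, $p=(1-\beta)n$, the sum over cells $1\le i\le j\le m-1$ is a Riemann sum, whence $\tfrac1{n^2}\log_2 F(\beta n,(1-\beta)n)\to G(\beta)$ with
\[
G(\beta)\;:=\;\frac{1}{\log2}\iint_{0\le x\le y\le\beta}\log\frac{2(1-\beta)+x+y}{x+y}\;dx\,dy,
\]
an elementary integral; one checks that $G$ is continuous on $[0,1]$, vanishes at the endpoints, and is positive and unimodal in between.

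\emph{Step 3 (limit and optimization).} Feeding the regularity of $G$ into the recursion of Step~1 — standard sub-/super-additivity estimates for the two bounds, together with the fact that the block sizes the recursion produces decay geometrically, so that all but finitely many of them contribute only $o(n^2)$ — gives $\tfrac1{n^2}\log_2 v(n)\to c$, where $c$ is the unique solution of
\[
c\;=\;\max_{0<\beta\le1}\Bigl(G(\beta)+c\,(1-\beta)^2\Bigr).
\]
At the (unique) maximizer $\beta^{\ast}$ one has $c=G(\beta^{\ast})/\bigl(\beta^{\ast}(2-\beta^{\ast})\bigr)$; setting $\al:=1-\beta^{\ast}$ this reads $c=\gamma/\log 2$ with $\gamma\approx0.2033$ and $\al\approx0.4332$. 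Finally, the argmax inside the recursion forces the first (largest) block to have size $b_1\sim\beta^{\ast}n=(1-\al)n$; iterating — each step replaces $n$ by $n-b_1\sim\al n$ — gives $b_i\sim(1-\al)\al^{i-1}n$ for every fixed~$i$.

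\emph{Main obstacle.} The genuine work lies in Step~2 (the vexillary/flag identification and, above all, the exact staircase product formula for $F(m,p)$), and in the passage in Step~3 from the discrete maximization recursion to the clean variational statement — in particular verifying that the optimum is globally the near‑geometric sequence and that truncation to finitely many blocks is harmless. Once $G$ is in closed form, solving the transcendental equation for $\beta^{\ast}$ and extracting the constants $\gamma$ and $\al$ is routine.
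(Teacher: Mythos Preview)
Your proposal is correct and follows essentially the same route as the paper: the same block factorization (Proposition~\ref{prop:red}), the same Proctor/Fomin--Kirillov product formula for the staircase factor (your $F(m,p)$ is the paper's $F(p,m)$), the same Riemann--sum limit $G(\beta)=\fr(1-\beta)/\log 2$, and the same fixed--point equation $c=\max_\beta\bigl(G(\beta)+c(1-\beta)^2\bigr)$ with $\al=1-\beta^\ast$. The only substantive difference is that where you invoke ``standard sub-/super-additivity'' in Step~3, the paper instead proves the uniform two--sided bound $-2n\le \log F(m,n-m)-n^2\fr(m/n)\le 0$ (Proposition~\ref{p:bdF}) and closes an induction $|\log v(n)-\gamma n^2|\le 4n$ (Lemma~\ref{l:limit}); you correctly flag this passage as the main obstacle.
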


\smallskip

In other words, the \emph{runs}~$b_i$ form a geometric distribution
in the limit.  See Figure~\ref{fig:limit_shape_richardson} for examples
of the  permutation matrix of such~$w$.  A posteriori this is unsurprising,
since the weights of reduced words are heavily skewed in favor of having
many transpositions at the end.

\begin{figure}[hbt]
\begin{minipage}{.54\textwidth}
  \centering
\includegraphics[height=3.8cm]{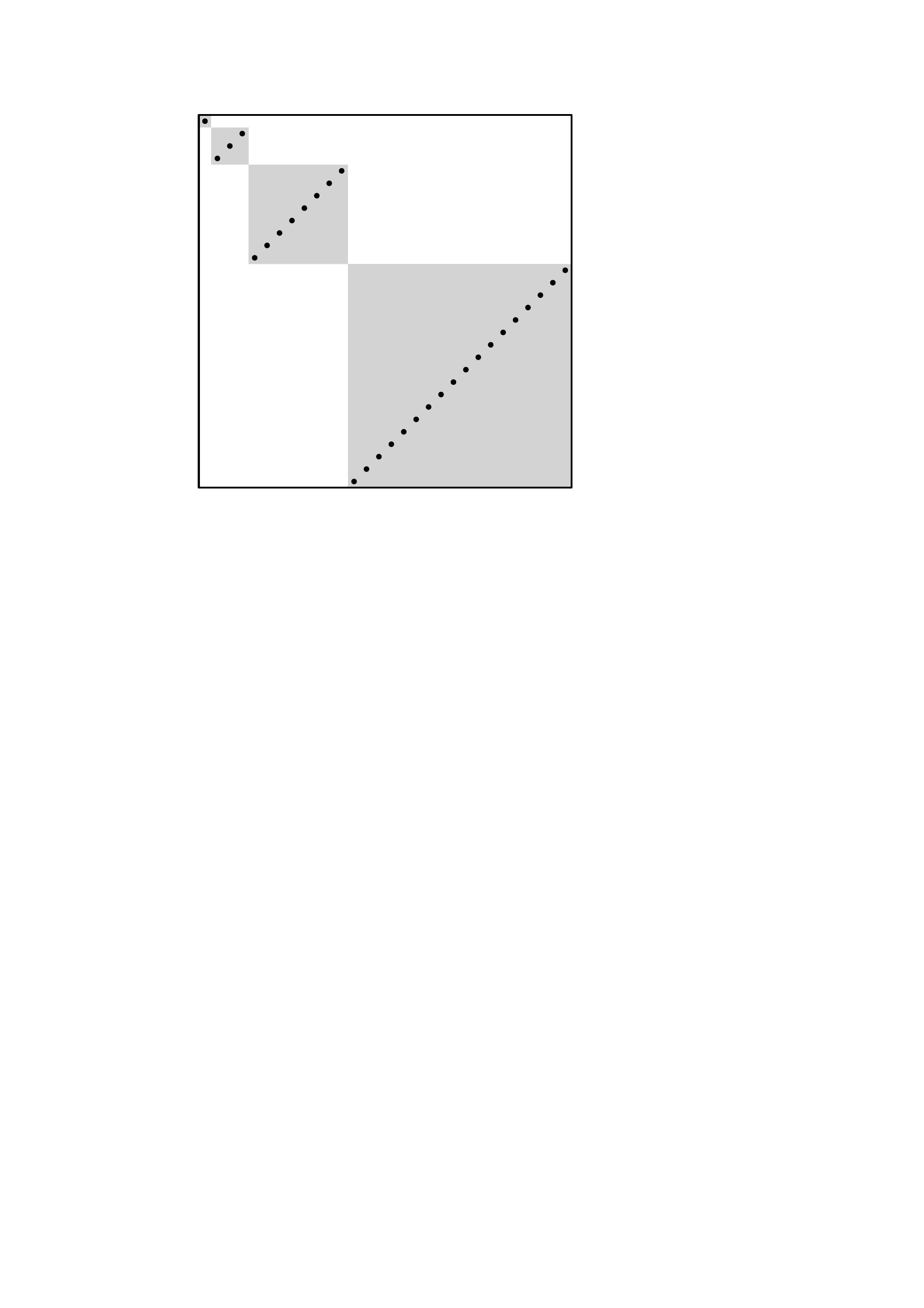}
\end{minipage}\!
\begin{minipage}{.49\textwidth}
  \centering
\includegraphics[height=3.8cm]{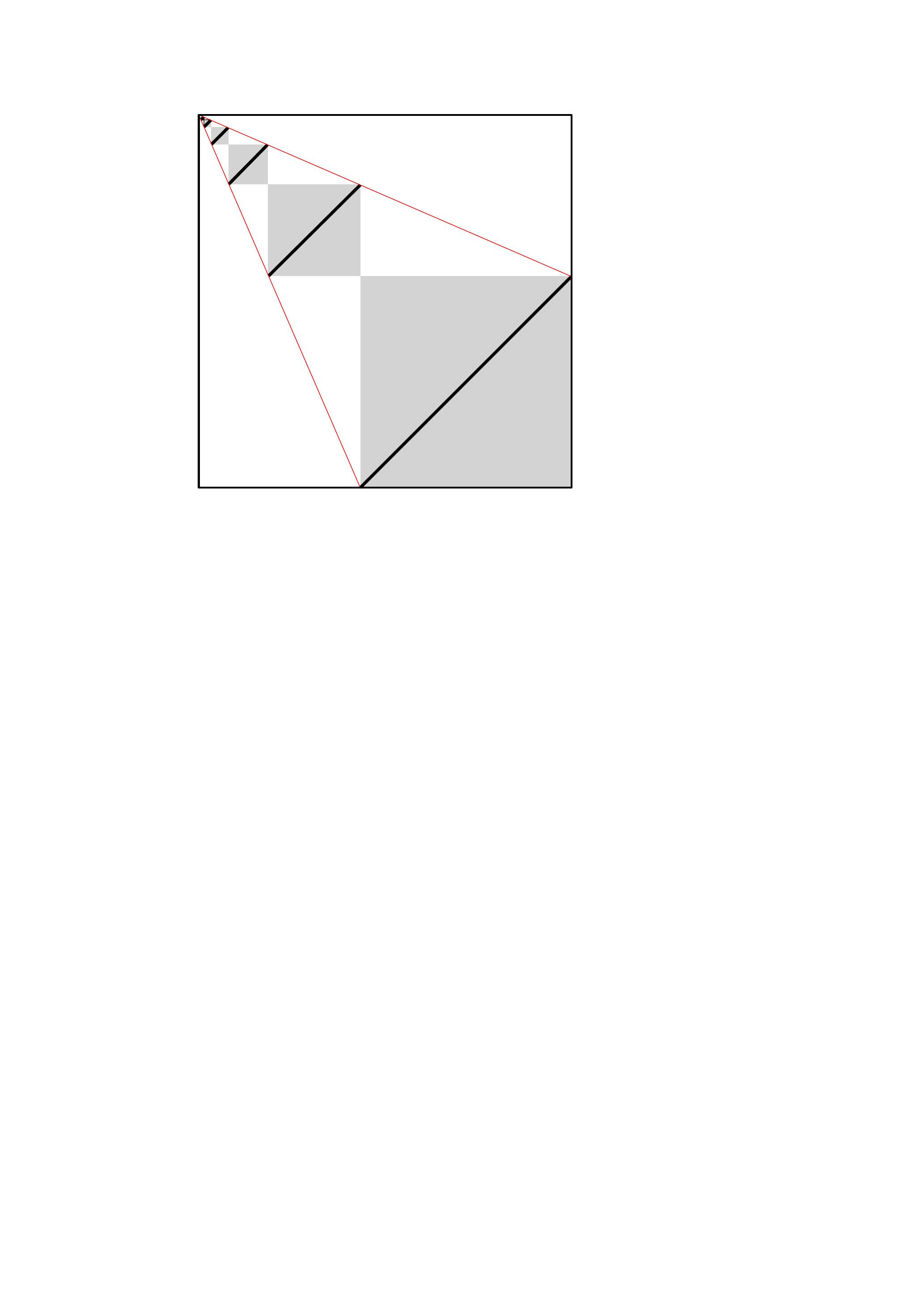}
\end{minipage}
\caption{Shapes of optimal layered permutations
 $w(1,3,8,18)$ and $w(2,4,9,20,46,106,246,567)$, of size $30$ and~$1000$, respectively.}
\label{fig:limit_shape_richardson}
\end{figure}

\smallskip

The story behind the theorem is also quite interesting.   Calculations for
$n\le 10$ reported in~\cite{MeS} and~\cite{St}, prompted Merzon and
Smirnov to make the following conjecture:

\begin{conjecture}[{\cite[Conj.~5.7]{MeS}}] \label{conj:main}
For every~$n$, all permutations $w$ attaining the maximum $u(n)$ are
layered permutations. In particular, $u(n) = v(n)$.
\end{conjecture}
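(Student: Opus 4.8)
The plan is to deduce Conjecture~\ref{conj:main} from a single rigidity statement.

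\smallskip

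\noindent\textbf{Main Lemma.} \emph{If $w\in S_n$ is not layered, then there exists $w'\in S_n$ with $\Ups_{w'} > \Ups_w$.}

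\smallskip

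\noindent Since $u(n)\ge v(n)$ trivially and every layered permutation has $\Ups_w\le v(n)$ by definition of $v(n)$, the Main Lemma forces every maximizer of $\Ups$ to lie in $\CR_n$; this gives both $u(n)=v(n)$ and the ``in particular'' clause at once. So the entire content is the Main Lemma, and the first reduction I would record is the classical fact (see e.g.~\cite[$\S$2.1.4]{Kit}) that $w$ is layered if and only if it avoids both patterns $231$ and $312$. Thus a non-layered $w$ contains a three-term occurrence of $231$ or $312$, and it suffices to produce a \emph{straightening move} $w\rightsquigarrow w'$ on such an occurrence that strictly increases $\Ups$.

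To make ``strictly increases'' tractable I would abandon Macdonald's identity~\eqref{eq:macdonald} in favor of the honest cardinality interpretation $\Ups_w=\#\mathrm{PD}(w)$, where $\mathrm{PD}(w)$ is the set of reduced pipe dreams, equivalently the Billey--Jockusch--Stanley compatible sequences $(a_i,t_i)$ with $(a_i)\in\RR(w)$ and $1\le t_i\le a_i$. For a straightening move I would try to build an explicit injection $\iota\colon\mathrm{PD}(w)\hookrightarrow\mathrm{PD}(w')$ and then exhibit at least one pipe dream of $w'$ outside its image, yielding $\Ups_{w'}>\Ups_w$. The natural candidate for $w'$ comes from a minimal ``un-crossing'' of the offending triple, cyclically rotating the three involved values so as to move the large descent toward higher indices of $w$ — in line with the heuristic recorded after Theorem~\ref{thm:main-limit}, and with the fact that the optimal blocks satisfy $b_1>b_2>\cdots$, placing the \emph{largest} decreasing run at the top. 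To track $\Ups$ across the move I would run the Lascoux--Sch\"utzenberger transition recursion, which specializes at $x=(1,\ldots,1)$ to an identity $\Ups_w=\sum_{w''}\Ups_{w''}$ and lets the comparison proceed term by term along the transition tree.

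In parallel I would constrain the maximizers from the outside. Stanley's bounds (Theorem~\ref{thm:bound_u(n)}) give $\log_2 u(n)\le \tfrac12 n^2(1+o(1))$, while Theorem~\ref{thm:main-limit} gives $\log_2 v(n)=\tfrac{\gamma}{\log 2}\,n^2(1+o(1))$ with $\gamma/\log 2\approx 0.293$; the gap between $0.293$ and $0.5$ shows that \emph{no purely asymptotic argument can close the conjecture}, so the Main Lemma must be an exact, for-every-$n$ combinatorial inequality. I would therefore take the known $n\le 10$ data cited in~\cite{MeS,St} as the base of an induction, and use the direct-sum structure $w_0^{(b_k)}\oplus\cdots\oplus w_0^{(b_1)}$ of layered permutations to organize an induction on $n$ in which the last decreasing block is appended, comparing any non-block appendage against the optimal one.

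The hard part will be the strict injection in the Main Lemma. Pipe dreams are genuinely two-dimensional, so a local rotation of three entries of $w$ can alter the reduced-word set globally; neither $\iota$ nor its non-surjectivity is forced by any off-the-shelf result, since ladder and chute moves connect pipe dreams of a \emph{fixed} $w$ rather than across permutations, and $\Ups$ is non-monotone in Bruhat order (indeed $\Ups_{\id}=\Ups_{w_0}=1$). Controlling these global effects — and in particular guaranteeing \emph{strict} inequality for every non-layered $w$ rather than merely weak dominance — is where the real work lies. A plausible fallback is to first establish weak dominance $\Ups_{w'}\ge\Ups_w$ via $\iota$, and then recover strictness separately by exhibiting a single new compatible sequence of $w'$ supported on the rotated columns that has no $\iota$-preimage.
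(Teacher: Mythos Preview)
The statement you are attempting to prove is a \emph{conjecture}, not a theorem; the paper does not prove it and explicitly says so in \S\ref{ss:finrem-exp}: ``the Merzon--Smirnov Conjecture~\ref{conj:main} remains open.'' So there is no proof in the paper to compare against, and the relevant question is simply whether your proposal constitutes a proof. It does not.

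Your reduction to the Main Lemma is correct and clean, but the Main Lemma is the entire content of the conjecture, and you do not prove it --- you describe a strategy (a straightening move on a $231$ or $312$ occurrence, an injection $\iota$ between pipe-dream sets, transition recursion to compare terms) and then candidly concede that ``neither $\iota$ nor its non-surjectivity is forced by any off-the-shelf result'' and that ``controlling these global effects \ldots\ is where the real work lies.'' That is an accurate assessment of the difficulty, and it is exactly why the conjecture is open. In particular, the local three-letter rotation you propose does not in general preserve the length $\ell(w)$, so the reduced pipe-dream sets $\mathrm{PD}(w)$ and $\mathrm{PD}(w')$ live in different cardinality regimes and there is no evident bijective or injective mechanism between them; the transition recursion expresses $\Ups_w$ as a sum over \emph{different} permutations $w''$, not over pipe dreams of a single target $w'$, so it does not by itself produce the comparison $\Ups_{w'}>\Ups_w$ you need. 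Your observation that the asymptotic bounds ($0.293$ versus $0.5$) cannot close the gap is correct and is precisely the point the paper makes: Theorem~\ref{thm:main-limit} resolves Stanley's questions \emph{restricted to layered permutations}, and Conjecture~\ref{conj:main} is what would be needed to remove that restriction. What you have written is a reasonable research outline, but it is not a proof, and no proof is currently known.
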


In other words, if the Merzon--Smirnov conjecture holds, our
Theorem proves Stanley's conjecture with the same limit value
and limiting description, as suggested by Stanley
(see~$\S$\ref{ss:finrem-exp} however).  Unconditionally,
Theorem~\ref{thm:main-limit}
improves a the lower bound for the \ts $\liminf$ \ts in
Theorem~\ref{thm:bound_u(n)} to about~$0.2932$.

\begin{remark}{\rm
We learned about the Merzon--Smirnov
conjecture from Hugh Thomas, who used it to compute $v(n)$
and permutations attaining it up to $n=300$ (see the Appendix).
This data allowed us to make a conjecture on the limit
shape, which we prove in the theorem.
}
\end{remark}

\medskip

\subsection*{Exact constants} The constants $\al$
and $\gamma$ in Theorem~\ref{thm:main-limit} are defined
as follows. Consider the function
\begin{equation} \label{eq:defr}
\fr(x) \, := \, x^2\log x \. - \.\frac12 \. (1-x)^2 \log(1-x)\.
- \. \frac12 \. (1+x)^2\log(1+x) \.  + \. 2\ts x\ts \log 2\ts.
\end{equation}
This function is obtained from a double integral that approximates the
logarithm of the product formula of Proctor \cite{Pr} for the number
of certain plane partitions (Proposition~\ref{p:bdF}). Then $\al$ is defined as the solution
other than $x=1$ of the equation
\[
2\ts x \ts \fr(x) \. + \. (1-x^2) \ts \fr'(x)\, = \, 0\ts,
\]
see Figure~\ref{fig:plots} for plots of $\fr(x)$ and the equation above.
The constant $\gamma$ is defined as
\[
\gamma \. := \. \frac{\fr(\al)}{1-\al^2}\..
\]
One can show that $\al$ is transcendental by using Baker's theorem,
see~\cite[$\S$2.1]{Baker}, but this goes beyond the scope of this paper.
It would be interesting to see if existing technology allows to show
that $\gamma$ is also transcendental.

\medskip

\subsection*{Outline of the paper}
 In Section~\ref{sec:background} we give the necessary
background on asymptotics and on the principal evaluation of
Schubert polynomials of layered permutations.
In Section~\ref{sec:proof} we prove Theorem~\ref{thm:main-limit}.
We conclude with final remarks and open problems
in Section~\ref{sec:finrem}.

\bigskip

\section{Background} \label{sec:background}

\subsection{Permutations}
We write permutations of $\{1,2,\ldots,n\}$
as $w=w_1w_2\ldots w_n \in S_n$, where $w_i$ is the
image of~$i$. Given two permutations $u$ in $S_m$ and $v$ in $S_n$ we
denote by $u\times v$ the following permutation of $S_{m+n}$:
\[
u \times v \. := \. u_1\ts u_2\ts \ldots \ts u_m \,  (m+v_1)\ts
(m+v_2) \. \ldots \. (m+v_n).
\]
Similarly, denote by \ts $1^m \times w$ \ts the permutation
$$
1^m \times w \. := \. 1\ts 2\ts \ldots \ts m \, (m+w_1)\ts (m+w_2)\.\ldots \. (m+w_n)\ts.
$$
Finally, let \ts$|b| = b_1+\cdots+b_k$.

\subsection{Product formulas for $\Ups_w$ for layered permutations}
In this section we give a product formula for
$\Ups_w$ when $w$ is a layered permutation $w(b_k,\ldots,b_1)$.

Let  $w_0$ be the longest permutation
$(p,p-1,\ldots,1)$ and let
$$
F(m,p)\, := \, \Ups_{1^m \times w_0}\ts.
$$
Fomin--Kirillov~\cite{FK} showed that $F(m,p)$ counts  the number of
plane partitions of shape $(p-1,p-2,\ldots,1)$ with entries at most~$m$.
This number of plane partitions has a product formula given by Proctor~\cite{Pr}.

\begin{theorem}[{\cite{FK,Pr}}] \label{thm:FK}
In the notation above, we have:
$$
F(m,p)
 \, = \, \prod_{1\leq i < j \leq p} \. \frac{2m+i+j-1}{i+j-1}\..
$$
\end{theorem}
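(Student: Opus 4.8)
Since the statement bundles together two results from the literature---the Fomin--Kirillov interpretation of $F(m,p)$ as a count of plane partitions and Proctor's product formula for that count---the plan is to present a proof in two independent steps, each of which one can either quote from \cite{FK,Pr} or reprove along the lines sketched below.

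\emph{Step 1: $F(m,p)$ counts plane partitions.} I would begin from the rc-graph (pipe dream) model for $\Ups_w$ and observe that $w:=1^m\times w_0$ is vexillary, i.e.\ $2143$-avoiding: a position $i\le m$ cannot play the role of the ``$2$'' of such a pattern, since every value to its right is larger, so any occurrence would have to lie entirely inside the decreasing block $w_{m+1}\cdots w_{m+p}=(m+p)(m+p-1)\cdots(m+1)$, which is impossible. By the flagged-Schur expression for vexillary Schubert polynomials (see e.g.\ \cite{Man}), $\mathfrak{S}_w$ is then the flagged Schur polynomial of shape $\lambda(w)=\delta_{p-1}:=(p-1,p-2,\ldots,1)$ with flag $\phi(w)=(m+1,m+2,\ldots,m+p-1)$, read off from the positions of the nonzero entries of the code of $w$. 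Setting all variables to $1$, $\Ups_w$ becomes the number of fillings $T$ of $\delta_{p-1}$ that weakly increase along rows, strictly increase down columns, and whose row $i$ uses entries from $\{1,\ldots,m+i\}$; and $T_{ij}\mapsto\pi_{ij}:=(m+i)-T_{ij}$ is then a bijection onto plane partitions $\pi$ of shape $\delta_{p-1}$ with entries in $\{0,\ldots,m\}$, since the flag constraint becomes $\pi_{ij}\ge 0$, column-strictness forces $T_{ij}\ge i$ hence $\pi_{ij}\le m$, and the two monotonicities transfer directly. (Alternatively one can bypass the symmetric-function language and construct the pipe-dream $\leftrightarrow$ plane-partition bijection by hand, using that $w_0$ has a unique pipe dream and that the $m$ inserted rows encode precisely the plane partition.)

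\emph{Step 2: the product formula.} It remains to count $N(m,p):=\#\{\text{plane partitions of shape }\delta_{p-1}\text{ with entries }\le m\}$, which is Proctor's theorem. To reprove it I would use the Jacobi--Trudi-type determinant for flagged Schur polynomials together with $h_k(1^t)=\binom{t+k-1}{k}$, obtaining
\[
N(m,p)\,=\,\det\!\left[\binom{m+p-i+j-1}{p-2i+j}\right]_{i,j=1}^{p-1},
\]
and then evaluate this binomial determinant to the claimed product by a standard determinant lemma (e.g.\ one of Krattenthaler's), or equivalently by reading it through the Lindstr\"om--Gessel--Viennot lemma as a count of families of non-intersecting lattice paths and performing the resulting Vandermonde-type simplification. (Sanity check: for $p=2$ the determinant is $\binom{m+1}{1}=m+1=\frac{2m+2}{2}$, and the $m=0$ case of the whole identity reads $1=1$.)

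\emph{Main obstacle.} The genuinely delicate point is Step 1, namely pinning down the flag: one must check that row $i$ of the staircase is bounded by \emph{exactly} $m+i$, since an error by a constant or by a row-dependent shift would silently change both the shape and the entry bound of the resulting plane partitions, and hence the final product. Once the correct flagged model is fixed, Step 1 reduces to a one-line bijection and Step 2 to a routine, if slightly technical, determinant evaluation.
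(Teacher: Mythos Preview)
The paper does not prove this theorem at all; it simply quotes the result, attributing the plane-partition interpretation of $F(m,p)$ to Fomin--Kirillov and the product formula for that count to Proctor. Your two-step outline matches this attribution exactly, and the sketches you give for both steps are correct: the vexillarity check for $1^m\times w_0$, the identification of the shape $\delta_{p-1}$ and flag $(m+1,\ldots,m+p-1)$ from the code, the bijection $\pi_{ij}=(m+i)-T_{ij}$ to plane partitions with entries $\le m$, and the flagged Jacobi--Trudi/LGV route to Proctor's product are all sound. So your proposal is entirely consistent with---and considerably more detailed than---the paper's own treatment.
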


\noindent
In notation of~\cite{MPP-asy}, we have:
$$
F(m,p)
 \, = \, \frac{\Lambda(2m+2p)\, \Lambda(2m+1) \,\Phi(p)
 }{\Phi(2m+p) \,\Lambda(2p)\, }\.,
$$
where $\Phi(n):=1!\cdot 2! \cdots (n-1)!$ and $\Lambda(n) := (n-2)!(n-4)!\cdots$

\begin{proposition}\label{prop:red}
For nonnegative integers $b_1,b_2,\ldots,b_k$, let $w(b_k,\ldots,b_1)$
be the associated layered permutation then
\begin{equation} \label{eq:keystep}
\Ups_{w(b_k,\ldots,b_1)}  \. = \. \Ups_{w(b_k,\ldots,b_2)} \cdot F\bigl(|b| -
b_1,b_1\bigr),
\end{equation}
where \ts $|b| = b_1+b_2+\cdots + b_k$.
\end{proposition}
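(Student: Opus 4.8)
The plan is to reduce~\eqref{eq:keystep} to a statement about principal specializations of a direct sum of permutations and then prove it through Macdonald's identity~\eqref{eq:macdonald}. Writing $n=|b|$, $m:=b_2+\cdots+b_k=|b|-b_1$, and $v:=w(b_k,\ldots,b_2)\in S_m$, one first checks directly from the definitions that
\[
w(b_k,\ldots,b_2,b_1) \. = \. v \times w_0\ts,
\]
where $w_0=(b_1,b_1-1,\ldots,1)\in S_{b_1}$ is the longest element: the last $b_1$ values of $w(b_k,\ldots,b_1)$ are $n,n-1,\ldots,n-b_1+1=(m+w_0(1))\cdots(m+w_0(b_1))$, while its first $m$ values spell out exactly the layered permutation $v$ (all the earlier runs only use the values $1,\ldots,m$). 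So it suffices to prove $\Ups_{v\times w_0}=\Ups_v\cdot F(m,b_1)$, and the argument below in fact works for any $v\in S_m$.

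The combinatorial core is the structure of reduced words under direct sums. Set $\ell_1=\ell(v)$ and $\ell_2=\ell(w_0)=\binom{b_1}{2}$, so $\ell(v\times w_0)=\ell_1+\ell_2$. The permutation $v\times w_0$ lies in the standard parabolic subgroup $S_m\times S_{n-m}\le S_n$ generated by $\{s_i:i\ne m\}$; by standard Coxeter-group theory its $S_n$-length equals its length inside this parabolic and, crucially, no reduced word of $v\times w_0$ contains the letter~$m$. Since $s_1,\ldots,s_{m-1}$ commute with $s_{m+1},\ldots,s_{n-1}$, a word is a reduced word of $v\times w_0$ precisely when it is a shuffle of some $\mathbf a\in\RR(v)$ with some shifted word $(m+c_1,\ldots,m+c_{\ell_2})$, $\mathbf c\in\RR(w_0)$; moreover this is a bijective correspondence, since from a reduced word of $v\times w_0$ one recovers $\mathbf a$ and $\mathbf c$ as the subwords of letters $<m$ and of letters $>m$ (shifted down by $m$), along with the interleaving pattern. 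There are $\binom{\ell_1+\ell_2}{\ell_1}$ shuffles of a fixed pair $(\mathbf a,\mathbf c)$, all having the same product of letters $\bigl(\prod_i a_i\bigr)\bigl(\prod_j(m+c_j)\bigr)$, so~\eqref{eq:macdonald} gives
\[
\Ups_{v\times w_0} \. = \. \frac{1}{(\ell_1+\ell_2)!}\binom{\ell_1+\ell_2}{\ell_1}\sum_{\mathbf a\in\RR(v)}\ \sum_{\mathbf c\in\RR(w_0)}\Bigl(\prod_i a_i\Bigr)\Bigl(\prod_j(m+c_j)\Bigr) \. = \. \Ups_v\cdot\frac1{\ell_2!}\sum_{\mathbf c\in\RR(w_0)}\prod_j(m+c_j)\ts.
\]

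It remains to recognize the last factor as $F(m,b_1)$. The reduced words of $1^m\times w_0\in S_{m+b_1}$ are exactly the shifts $(m+c_1,\ldots,m+c_{\ell_2})$ of reduced words $\mathbf c\in\RR(w_0)$, and $\ell(1^m\times w_0)=\ell_2$, so applying~\eqref{eq:macdonald} to $1^m\times w_0$ identifies $\frac1{\ell_2!}\sum_{\mathbf c}\prod_j(m+c_j)$ with $\Ups_{1^m\times w_0}=F(m,b_1)=F(|b|-b_1,b_1)$; combined with the previous display this is~\eqref{eq:keystep}. The degenerate case $b_1=0$ is trivial, both sides equaling $\Ups_{w(b_k,\ldots,b_2)}$. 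The only step that needs genuine care is the description of $\RR(v\times w_0)$ as the shuffle set — in particular the vanishing of the separating letter~$m$ — which I would justify via the parabolic-subgroup facts quoted above; everything else is bookkeeping with Macdonald's identity.
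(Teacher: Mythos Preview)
Your proof is correct. Both you and the paper start from the same observation, namely that $w(b_k,\ldots,b_1)=v\times w_0$ with $v=w(b_k,\ldots,b_2)$, but you then diverge. The paper simply invokes the known polynomial identity $\mathfrak{S}_{u\times v}=\mathfrak{S}_u\cdot\mathfrak{S}_{1^m\times v}$ (citing \cite[(4.6)]{Mac} and \cite[Cor.~2.4.6]{Man}) and specializes to $x_i=1$; this is a one-line argument modulo the reference. You instead reprove the principal-specialization case of that identity from scratch via Macdonald's identity~\eqref{eq:macdonald}: the parabolic-subgroup fact that no reduced word of $v\times w_0$ uses the letter~$m$, combined with the commutation of $s_i$ for $i<m$ with $s_j$ for $j>m$, gives the shuffle description of $\RR(v\times w_0)$, and the weighted count factors. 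Your route is longer but more self-contained---it needs only~\eqref{eq:macdonald} and standard Coxeter combinatorics, not the cited factorization of Schubert polynomials---and along the way it proves $\Ups_{v\times w_0}=\Ups_v\cdot F(m,b_1)$ for \emph{arbitrary} $v\in S_m$, which is exactly the content of the cited polynomial identity at $x_i=1$.
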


\begin{proof}
The permutation $w(b_k,\ldots,b_1)$ can be written as the product
$w(b_k,\ldots,b_2) \times w_0(b_k)$. By properties of Schubert
polynomials (e.g. see \cite[(4.6)]{Mac} or \cite[Cor. 2.4.6]{Man}) we have that
\[
\mathfrak{S}_{w(b_k,\ldots,b_1)} = \mathfrak{S}_{w(b_k,\ldots,b_2)}
\cdot \mathfrak{S}_{1^{|b|-b_1} \times w_0(b_k)},
\]
and the result follows by doing a principal evaluation.
\end{proof}

\begin{remark} {\rm
Equation~\eqref{eq:keystep} can be turned into a dynamic program to find
layered permutations $w(b_k,\ldots,b_1)$ that achieves $v(n)$, see the
appendix.}
\end{remark}

\bigskip

\section{Asymptotics of the largest $v(n)$} \label{sec:proof}

\subsection{The outline}
We will use \eqref{eq:keystep} inductively to prove the main
result.  Let $p := b_1$ and
$m:=n-p$, so that \ts $m= b_2+\ldots+b_k$.
By definition of~$v(n)$, we have that
\[
v(n) \. = \, \max_{b \,:\, |b|=n} \ts \Ups_{w(b)}\ts.
\]
Next, using \eqref{eq:keystep}, $v(n)$ becomes
\begin{equation} \label{eq:defvn}
v(n) \. = \. \max_{1\le p \le n} \ts \bigl\{ v(n-p) \ts F(n-p,p) \bigr\}.
\end{equation}

We will need very precise estimates on $\log F(m,n-m)$.
Note that the exact asymptotic expansion for the
\emph{Barnes $G$-function}, which can be used to obtain
the asymptotics of $\Phi(\cdot)$ and $\La(\cdot)$,
see e.g.~\cite{AR}.  However, these bounds are insufficient as
we also need sharp bounds for the error terms which hold for
all~$m$ and $n$.  We obtain these in the
next subsection. These estimates are then combined with
Proposition~\ref{prop:red} to prove Theorem~\ref{thm:main-limit}.

\medskip

\subsection{Technical estimates}
Let $\fr(x)$ be the function  defined in
\eqref{eq:defr}. The next lemma gives bounds on \ts $\log
F(m,n-m)$ \ts in terms of the function~$\fr(x)$.

\begin{proposition} \label{p:bdF} For all integers \ts $n \ge m \ge 0$, we have:
\[
-2 \ts n \. \leq \, \log F(m,n-m) \ts - \ts n^2 \fr(m/n) \. \leq \. 0.
\]
\end{proposition}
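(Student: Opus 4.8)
The plan is to combine the Fomin--Kirillov--Proctor product formula (Theorem~\ref{thm:FK}) with a careful comparison between a lattice sum and a double integral. Writing $p := n-m$ and $\psi_m(t) := \log\bigl((2m+t)/t\bigr)$, Theorem~\ref{thm:FK} gives
\[
\log F(m,n-m) \. = \, \sum_{1\le i<j\le p} \psi_m(i+j-1)\ts.
\]
Here $\psi_m\ge 0$ on $(0,\infty)$, and for $m\ge 1$ it is strictly decreasing and strictly convex (with $\psi_m''(t) = 4m(m+t)/(t^2(2m+t)^2)$); the case $m=0$ is trivial, since then $F(0,n)=1$ and $\fr(0)=0$. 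The first step is to evaluate the double integral $J := \iint_{T}\psi_m(u+v)\,du\,dv$ over the triangle $T := \{(u,v): 0\le u\le v\le p\}$ and to verify that $J = n^2\fr(m/n)$. Rescaling $(u,v) = (p\xi,p\eta)$ turns $J$ into $p^2\iint_{0\le\xi\le\eta\le 1}\log\bigl((a+\xi+\eta)/(\xi+\eta)\bigr)\,d\xi\,d\eta$ with $a := 2m/p$; passing to the variable $\sigma=\xi+\eta$ (the length of the slice $\{\xi+\eta=\sigma\}\cap\{0\le\xi\le\eta\le 1\}$ is the tent function $\min(\sigma/2,\,1-\sigma/2)$ on $[0,2]$) reduces this to one-variable integrals computed from $\int t\log t\,dt = \tfrac12 t^2\log t - \tfrac14 t^2$ and $\int\log t\,dt = t\log t - t$. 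Feeding in $p = n(1-x)$, $a = 2x/(1-x)$, $a+1 = (1+x)/(1-x)$, $a+2 = 2/(1-x)$ with $x := m/n$, the boundary contributions collapse to precisely $n^2\fr(x)$ as in~\eqref{eq:defr}; this is exactly how the function $\fr$ arises.

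For the upper bound $\log F(m,n-m)\le n^2\fr(m/n)$, let $Q_{ij} := [i-1,i]\times[j-1,j]$, a unit square with centroid $(i-\tfrac12,j-\tfrac12)$. Convexity of $(u,v)\mapsto\psi_m(u+v)$ and Jensen's inequality give $\psi_m(i+j-1)\le\iint_{Q_{ij}}\psi_m(u+v)\,du\,dv$. For $1\le i<j\le p$ the squares $Q_{ij}$ are pairwise disjoint and contained in $T$, so summing and using $\psi_m\ge 0$ yields
\[
\log F(m,n-m) \, = \, \sum_{1\le i<j\le p}\psi_m(i+j-1) \, \le \, \iint_{T}\psi_m(u+v)\,du\,dv \, = \, J \, = \, n^2\fr(m/n)\ts,
\]
which is the right-hand inequality.

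For the lower bound we must show $J - \log F(m,n-m)\le 2n$. Split it as $(J - J') + (J' - \log F(m,n-m))$, where $J' := \iint_{\bigcup_{i<j}Q_{ij}}\psi_m(u+v)\,du\,dv$. Up to a null set, $T\setminus\bigcup_{i<j}Q_{ij}$ is the disjoint union of the half-squares $\{\ell\le u\le v\le\ell+1\}$ for $\ell=0,\dots,p-1$; the $\ell=0$ piece contributes an explicit $O(\log m)$ constant, and for $\ell\ge 1$ one has $\iint_{\{\ell\le u\le v\le\ell+1\}}\psi_m(u+v)\,du\,dv\le\tfrac12\psi_m(2\ell)$, so that $J - J'$ is at most $O(\log m) + \tfrac12\sum_{\ell=1}^{p-1}\log(1+m/\ell) = O(\log m) + \tfrac12\log\binom{m+p-1}{p-1} \le O(\log m) + \tfrac12(n-1)\log 2$. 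This binomial bound is what keeps the near-diagonal error linear with a small constant rather than $O(n\log n)$. For $J' - \log F(m,n-m)$ one refines the Jensen step: bounding $\psi_m$ above by its secant line on $[i+j-2,\,i+j]$ and integrating against the tent weight gives $\iint_{Q_{ij}}\psi_m(u+v)\,du\,dv - \psi_m(i+j-1)\le\tfrac12\bigl(\Delta(i+j-2)-\Delta(i+j-1)\bigr)$, where $\Delta(s) := \psi_m(s)-\psi_m(s+1)\ge 0$; grouping by the value $k=i+j$ (at most $\tfrac12 k$ pairs have $i+j=k$) and Abel summation against the telescoping series $\sum_s\Delta(s)=\psi_m(2)$ bound this by $O(\log m)$ as well. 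Altogether $J - \log F(m,n-m)\le\tfrac12(n-1)\log 2 + O(\log m)\le 2n$ for all $n\ge 1$, with room to spare. I expect the evaluation of $J$ to be long but routine; the \emph{genuinely delicate part} is this last step --- making the lower-bound error explicit and provably below $2n$ uniformly in $0\le m\le n$, which hinges on the binomial bound and on controlling the logarithmic singularity of $\psi_m$ at the origin.
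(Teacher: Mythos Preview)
Your upper bound is essentially the paper's argument: both use convexity of $(u,v)\mapsto \psi_m(u+v)$ together with Jensen to dominate the lattice sum by the triangular integral $J=n^2\fr(m/n)$.  Your choice of the axis-aligned squares $Q_{ij}=[i-1,i]\times[j-1,j]$ (centroid $(i-\tfrac12,j-\tfrac12)$, hence $\psi_m(i+j-1)$ at the centroid) is in fact cleaner than the paper's shift by $(-1/\sqrt2,1/\sqrt2)$, but the idea is identical.

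Your lower bound is correct but takes a \emph{different} route from the paper.  The paper does not reuse convexity; it uses only that $g(x,y)=\psi_m(x+y)$ is monotone decreasing in each variable.  From monotonicity, each lattice value $g(i,j)$ dominates the average of $g$ over a unit square shifted \emph{forward}, so the sum dominates $\int_1^p\!\int_x^p g\,dy\,dx$.  The defect $J-\int_1^p\!\int_x^p g = \int_0^1\!\int_x^p g\,dy\,dx$ is a thin strip, bounded by the single integral $\int_0^p g(0,y)\,dy$, which after Jensen's inequality on the concave function $\log$ is at most $(2m+p)\log 2\le 2n$.  This is short and requires no careful bookkeeping of constants.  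Your approach --- splitting $J-\log F$ into the diagonal half-squares (handled via $\sum_\ell \log(1+m/\ell)=\log\binom{m+p-1}{p-1}\le (n-1)\log 2$) and the per-square Jensen excess (handled via the secant bound and telescoping second differences of $\psi_m$) --- also works and gives the slightly sharper leading term $\tfrac12(n-1)\log 2\approx 0.347\,n$ plus $O(\log m)$ corrections; indeed an explicit tally shows the total is below $2n$ for all $n\ge 1$.  The trade-off: the paper's monotonicity argument is quicker and avoids your ``genuinely delicate part'' entirely, while your trapezoid-rule style analysis is more systematic and makes the dependence on $m$ and $p$ more transparent.
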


We split the proof into two lemmas, one for the
upper bound and the other for the lower bound.

\begin{lemma} \label{l:UB}
 For all integers \ts $n \ge m \ge 0$, we have:
\[
\log F(m,n-m) - n^2 \fr(m/n) \leq 0.
\]
\end{lemma}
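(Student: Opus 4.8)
The plan is to start from the product formula in Theorem~\ref{thm:FK}, take logarithms, and bound $\log F(m,p)$ with $p=n-m$ from above by a Riemann-sum comparison. Writing
\[
\log F(m,n-m) \, = \, \sum_{1\le i<j\le n-m} \Bigl( \log(2m+i+j-1) - \log(i+j-1) \Bigr),
\]
I would first combine the terms into a single function of the pair $(i,j)$, namely $g(i,j) := \log\bigl(1 + \tfrac{2m}{i+j-1}\bigr)$, and note that the sum runs over lattice points in the triangle $\{1\le i<j\le n-m\}$. The idea is to compare this lattice sum with the double integral $\iint g(x,y)\,dx\,dy$ over the corresponding continuous triangle $\{0\le x<y\le n-m\}$. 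Since $g$ depends only on $s:=x+y$ and is a decreasing, convex function of $s$ on the relevant range, each lattice point can be matched with a unit square on which the integrand is pointwise at least $g(i,j)$ (choosing the square so that $x+y \le i+j-1$ throughout), which yields $\log F(m,n-m) \le \iint g$.

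The second step is to evaluate the double integral exactly. After the change of variables $s = x+y$, $t = y-x$ and integrating out $t$, the integral reduces to a one-dimensional integral of $s\log(1+2m/s)$-type terms over $s\in[0,n-m]$, which has an elementary antiderivative in terms of $s^2\log s$, $(s+2m)^2\log(s+2m)$, and polynomial corrections. Substituting the endpoints $s=0$ and $s=n-m$ and collecting terms, the answer should be exactly $n^2 \fr(m/n)$ after writing everything in terms of $x = m/n$: the three logarithmic terms $x^2\log x$, $-\tfrac12(1-x)^2\log(1-x)$, $-\tfrac12(1+x)^2\log(1+x)$ come from $\log(2m)$, $\log(m+p)=\log n$-type and $\log(2m+p)$ boundary contributions, and the linear term $2x\log 2$ absorbs the leftover constants. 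This is the bookkeeping-heavy part but it is a routine (if delicate) computation; the function $\fr$ in \eqref{eq:defr} is presumably defined precisely so that this identity holds on the nose.

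The main obstacle I anticipate is getting the Riemann-sum comparison to give an inequality with \emph{zero} slack — i.e.\ a clean $\le$ with no lower-order error term — rather than merely $\log F(m,n-m) \le n^2\fr(m/n) + O(n)$. This requires choosing the pairing between lattice points and unit squares carefully so that convexity/monotonicity of $g$ in $s$ works in the right direction everywhere, including near the diagonal $i=j$ and near the boundary $j = n-m$, and also handling the edge cases $m=0$ (where $F=1$ and one checks $\fr(0)=0$) and $m=n$ (where $p=0$, the sum is empty, and $\fr(1)$ must be $\ge 0$; in fact $\fr(1) = -\tfrac12\cdot 4\log 2 + 2\log 2 = 0$). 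Once the pairing is set up so that the integrand dominates the summand square-by-square, the upper bound follows immediately; the complementary lower bound (with the $-2n$ error, handled in the companion lemma) will come from the reverse pairing plus an explicit estimate of the boundary strips.
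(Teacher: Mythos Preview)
Your approach is essentially the paper's: take logarithms in the product formula, bound the lattice sum over the triangle by the double integral of $g(x,y)=\log(2m+x+y)-\log(x+y)$, and then check that this integral equals $(m+p)^2\fr(m/(m+p))=n^2\fr(m/n)$.

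The only substantive difference is in how the ``zero-slack'' Riemann comparison is executed. You propose to pair each lattice point $(i,j)$ with a unit square on which $x+y\le i+j-1$ pointwise, relying on monotonicity of $g$ in $s=x+y$; as you anticipated, arranging such a tiling with no leftover error is the awkward part. The paper sidesteps this by using convexity of $g$ in the plane (the Hessian is a nonnegative multiple of the all-ones matrix): after reindexing $j'=j-1$ the sum is over $1\le i\le j'\le p-1$, and one shifts each lattice point along the line $x+y=\text{const}$ so that the (unchanged) summands become the values of $g$ at \emph{centers} of unit squares lying inside the triangle $\{0\le x\le y\le p\}$. Convexity then gives $g(\text{center})\le \text{average over square}$, so the sum is at most the integral over the union of squares, which is at most the full triangular integral since $g\ge 0$. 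This center-of-square convexity trick is the clean way to get the inequality with no lower-order term; your change of variables $s=x+y$, $t=y-x$ for the integral is fine and reproduces \eqref{eq:origin_f}.
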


\begin{proof}
We use the product formula
for $F(m,p)$ in Theorem~\ref{thm:FK}.
\begin{align}
\log F(m,p) &= \sum_{1\leq i<j\leq p} \Bigl( \log(2m+i+j-1) -
  \log(i+j-1) \Bigr) \notag \\
&= \sum_{1\leq i\leq j'\leq p-1} \Bigl( \log(2m+i+j') -
  \log(i+j') \Bigr), \label{eq:sumbound}
\end{align}
where we changed the index to $j'=j-1$. Next, we approximate this sum
using a double integral. Let
\[
g(x,y):= \log(2m+x+y)-\log(x+y).
\]
Notice that the function $g(x,y)$ is
constant along the lines $x+y=k$ for constant $k$. Therefore, we can
shift the terms of the sum in the RHS of \eqref{eq:sumbound} by $(i,j)\mapsto
(i-1/\sqrt{2},j+1/\sqrt{2})$ without changing the sum (see center of
Figure~\ref{fig:triangle})
\begin{equation} \label{eq:sumbound2}
\log F(m,p)  \, = \, \sum_{(i,j) \in S} \ts \Bigl( \log(2m+i+j') -
  \log(i+j') \Bigr),
\end{equation}
where \ts $S =\bigl\{\mathbb{Z}^2 + (-1/\sqrt{2},1/\sqrt{2})\bigr\} \ts \cap \ts \bigl\{ (x,y)~:~0\leq x \leq p,
x <  y \leq p\bigr\}$.

\begin{figure}[hbt]
\begin{center}
\includegraphics[scale=0.7]{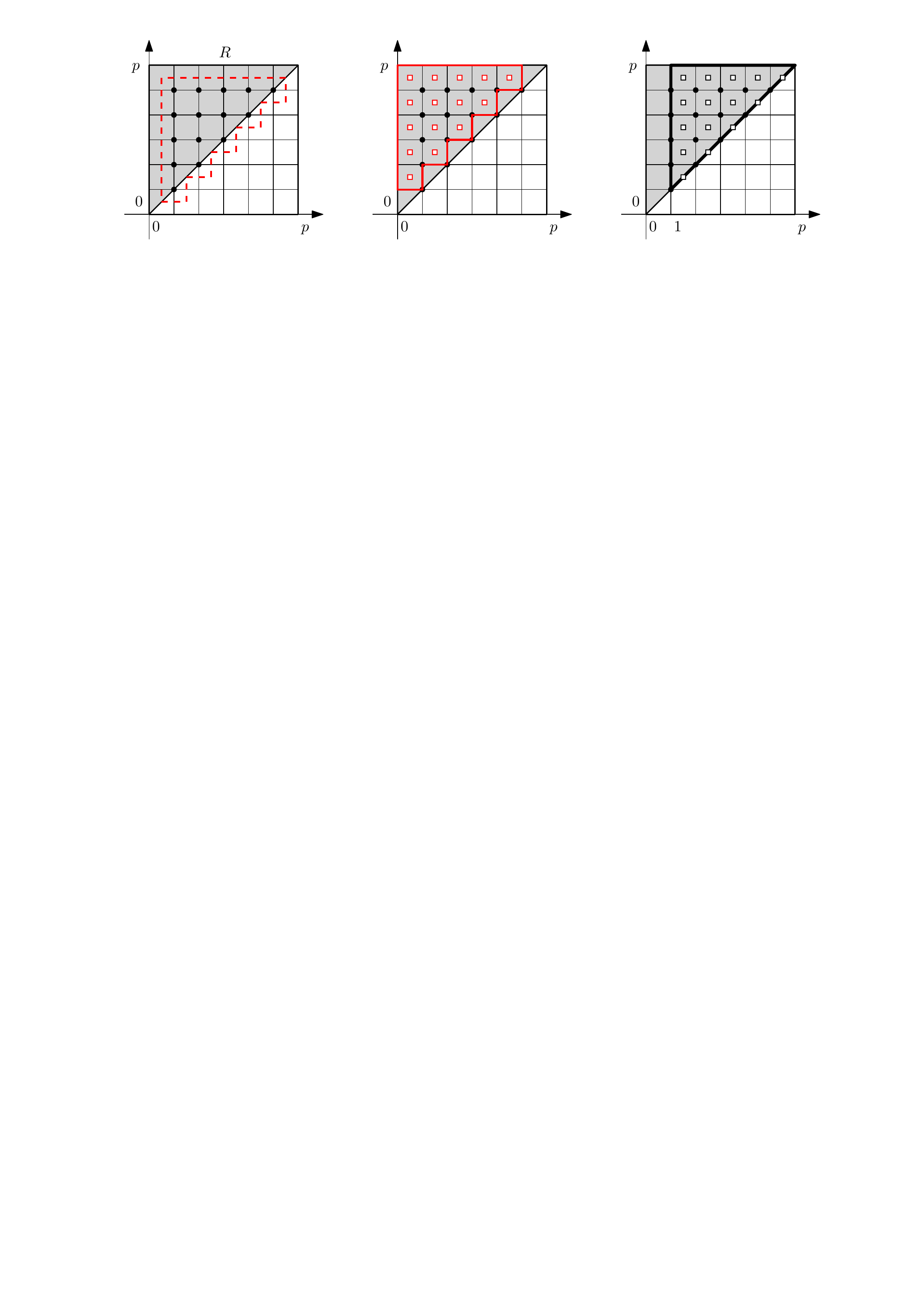}
\caption{Illustration of the proof of the upper and lower bounds of
  Proposition~\ref{p:bdF} for $\log F(m,p)$ for $p=5$. The lattice
  points $\bullet$ on the left are the support of the sum $\sum_{i
  \leq j} \. g(i,j)$. This sum remains the same if the support is
shifted by $\bigl(\frac{-1}{\sqrt{2}},\frac{1}{\sqrt{2}}\bigr)$, giving
points ${\footnotesize \textcolor{red}{\Box}}$ in the
middle. The original sum is bounded below by the sum over the support shifted by
$\bigl(\frac{1}{\sqrt{2}},\frac{1}{\sqrt{2}}\bigr)$, giving points ${\footnotesize \Box}$ in the right.}
\label{fig:triangle}
\end{center}
\end{figure}

Next, compute the Hessian \ts $\rH$ \ts of $g(x,y)$. We have:
\[
\rH \, = \, C \cdot \begin{bmatrix} 1 & 1 \\ 1 & 1 \end{bmatrix}, \quad \text{where} \quad
C \. = \. \frac{1}{(x+y)^2} \. - \.\frac{1}{(2\ts m+x+y)^2} \,\..
\]
Matrix $\rH$ has eigenvalues \ts $0$ \ts and \ts $2\ts C$ \ts that are nonnegative in
\ts $[0,p] \times [0,p]$.
Thus $g(x,y)$ is convex in this region. The modified sum in
\eqref{eq:sumbound2} is the sum of values of $g(x,y)$ over centers of
the unit squares which fit entirely in $R$. By convexity, each such value
of $g(x,y)$ is less than the average value of $g(x,y)$ over its
square. Hence the sum in
\eqref{eq:sumbound2} is bounded above by the double integral,
\begin{align*}
\log F(m,p) \leq \int_{0}^p\int_{y}^p \left(\log(2m+x+y)-\log(x+y)\right)\. dx\ts dy.
\end{align*}
Next, we compute this double integral and obtain
\begin{equation} \label{eq:origin_f}
\int_{0}^p\int_{y}^p \ts \left(\log(2m+x+y)-\log(x+y)\right)\. dx\ts dy \. = \.
(m+p)^2 \fr(m/(m+p)),
\end{equation}
for $\fr(x)$ defined in \eqref{eq:defr}. This proves the upper bound.
\end{proof}

\begin{lemma} \label{l:LB}
 For all integers \ts $n \ge m \ge 0$, we have:
\[
\log F(m,n-m) - n^2 \fr(m/n) \. \geq \. -2\ts n.
\]
\end{lemma}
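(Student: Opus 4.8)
The plan is to mirror the upper-bound argument from Lemma~\ref{l:UB}, but shifting the lattice in the \emph{opposite} diagonal direction so that the double integral becomes a \emph{lower} bound for the sum, and then to control the discrepancy between the resulting integral and $n^2\fr(m/n)$ by an error term that is linear in $n$. Concretely, starting again from the rewritten sum
\[
\log F(m,p) \, = \, \sum_{1\le i\le j'\le p-1} g(i,j'), \qquad g(x,y):=\log(2m+x+y)-\log(x+y),
\]
I would use that $g$ is constant along anti-diagonals $x+y=k$ to shift the support by $\bigl(\tfrac{1}{\sqrt2},\tfrac{1}{\sqrt2}\bigr)$ instead of $\bigl(\tfrac{-1}{\sqrt2},\tfrac{1}{\sqrt2}\bigr)$; this is exactly the rightmost picture in Figure~\ref{fig:triangle}. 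After this shift the lattice points are the centers of unit squares that now \emph{contain} the original triangular region $R=\{0\le x, x<y\le p\}$ in the relevant sense, so by convexity of $g$ on $[0,p]^2$ each value $g$ at a center is at least the average of $g$ over the corresponding square, and hence
\[
\log F(m,p) \; \ge \; \int\!\!\int_{R'} g(x,y)\,dx\,dy
\]
for a region $R'$ that is $R$ with a thin strip near the diagonal and near the boundary $x=0$, $x=p$ removed (or possibly with a small overhang).

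The main work, and the main obstacle, is then to bound
\[
n^2\fr(m/n) \; - \; \int\!\!\int_{R'} g(x,y)\,dx\,dy \; \le \; 2n.
\]
By \eqref{eq:origin_f} we already know $\int\!\!\int_{R} g = (m+p)^2\fr(m/(m+p)) = n^2\fr(m/n)$, so the difference equals $\int\!\!\int_{R\setminus R'} g(x,y)\,dx\,dy$, an integral over a region of area $O(n)$ (a strip of bounded width along a path of length $O(n)$). The point is that $g(x,y)\ge 0$ throughout $[0,p]^2$ since $2m\ge 0$, so the integral over the deleted strip is nonnegative; I need an \emph{upper} bound on it of the form $2n$. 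For that I would bound $g$ pointwise: $g(x,y)=\log\bigl(1+\tfrac{2m}{x+y}\bigr)\le \tfrac{2m}{x+y}\le \tfrac{2m}{x+y}$, which is problematic only as $x+y\to 0$, i.e.\ near the corner at the origin. So I would split the deleted region into the part bounded away from the origin, where $g$ is bounded by an absolute constant (indeed $g\le \log(1+2m/1)\le$ something like $\log(2n)$, which already forces a more careful accounting) — and here is the delicate point: a crude bound $g\le\log(2n)$ over a strip of area $O(n)$ only gives $O(n\log n)$, not $O(n)$.

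To get the clean linear bound I would instead integrate $g$ over the deleted region \emph{along anti-diagonals}: parametrize by $k=x+y$ and note that on the slice $x+y=k$ the function $g$ is the constant $\log\bigl(\tfrac{2m+k}{k}\bigr)$, while the slice of the deleted strip has length $O(1)$ (at most $\sqrt2$, coming from the $1/\sqrt2$ shift) except possibly near the boundary segments where it may be up to $O(1)$ as well. Hence
\[
\int\!\!\int_{R\setminus R'} g \; \le \; \sqrt2 \int_{0}^{2p} \log\!\Bigl(\frac{2m+k}{k}\Bigr)\,dk
\; = \; \sqrt2\,\bigl[(2m+2p)\log(2m+2p) - 2p\log(2p) - 2m\log(2m)\bigr],
\]
up to lower-order terms, and an elementary estimate shows the bracket is $O(n)$; I would then check the constant is at most $2$ (if not, tighten the shift argument or the slice-length bound — one has some slack because $\fr$ and the integral agree exactly, so only the thin overhang/underhang contributes). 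I would also separately handle the corner near the origin, where $\int_0^{\mathrm{const}}\log((2m+k)/k)\,dk$ converges (the singularity $\log(1/k)$ is integrable) and contributes only $O(\log n)$, absorbed into the $O(n)$. The key subtlety to get right is the precise geometry of which unit squares are deleted versus added after the diagonal shift, so that the integral is genuinely a lower bound and the deleted region genuinely has anti-diagonal slices of bounded length; once that bookkeeping is pinned down, the estimate $n^2\fr(m/n)-\log F(m,n-m)\le 2n$ follows.
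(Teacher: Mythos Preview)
There is a genuine gap in your argument: convexity of $g$ gives the inequality in the \emph{wrong} direction. For a convex function, Jensen's inequality says the value at the center of a square is \emph{at most} the average over that square, not at least; this is precisely what produced the upper bound in Lemma~\ref{l:UB}. So the step ``by convexity of $g$ each value $g$ at a center is at least the average'' fails, and without it you have no lower bound of the form $\log F(m,p)\ge \iint_{R'} g$. A related confusion: the diagonal shift $(1/\sqrt2,1/\sqrt2)$ is \emph{orthogonal} to the level lines $x+y=k$, not along them, so it does not preserve the sum either.

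The fix --- and what the paper does --- is to replace convexity by \emph{monotonicity}: since $g(x,y)=\log\bigl(1+2m/(x+y)\bigr)$ is decreasing in each variable, the value $g(i,j)$ at an original lattice point dominates $g$ everywhere on the unit square placed to the upper-right of $(i,j)$, hence dominates the average over that square. This yields directly $\log F(m,p)\ge \int_1^p\int_x^p g\,dy\,dx$, so the deleted region is simply the vertical strip $\{0\le x\le 1\}$. Bounding that strip by $\int_0^p g(0,y)\,dy$ (monotonicity again) and applying Jensen to the resulting expression $(2m+p)\log(2m+p)-2m\log(2m)-p\log p$ gives the clean linear bound $(2m+p)\log 2\le 2(m+p)=2n$ with no further geometry. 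Your anti-diagonal slicing, once the monotonicity step is in place, would lead to essentially the same one-variable integral and the same Jensen estimate, but only after the extra bookkeeping on the shape of $R\setminus R'$ that you flag as ``the key subtlety''; the paper's choice of deleted strip sidesteps that subtlety entirely.
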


\begin{proof}
Since the function $g(x,y)$ is decreasing along
the $x$ direction and $y$ direction then each value $g(i,j)$ in the
sum is bigger than the average value of $g(x,y)$ over the unit square
with center $(i+1/\sqrt{2},j+1/\sqrt{2})$ (see right of
Figure~\ref{fig:triangle}). Hence the original sum in
\eqref{eq:sumbound} is bounded below by the double integral

\begin{equation} \label{eq:keylowerbound}
\log F(m,p) \, = \, \sum_{1\leq i<j\leq p} g(i,j) \geq \int_1^p\int_x^p g(x,y) \. dy\ts dx.
\end{equation}
This integral can be written in terms of the original integral,
computed in \eqref{eq:origin_f}, as follows
\begin{align}
 \int_1^p\int_x^p g(x,y) \. dy\ts dx \, &  = \, \int_0^p\int_x^p g(x,y) \. dy\ts dx
\, - \, \int_0^1\int_x^p g(x,y) \. dy\ts dx \notag\\
&= \, (m+p)^2 \fr(m/(m+p)) \, -  \, \int_0^1\int_x^p g(x,y) \. dy\ts dx. \label{eq:pflowerbound_1}
\end{align}
Since the function $g(x,y)$ is decreasing in the $x$ direction then
the double integral in the RHS above is bounded by the following single integral
\begin{equation}
-\int_0^1\int_x^p g(x,y) \. dy\ts dx \, \geq \, -\int_0^p g(0,y) \. dy.
\end{equation}  \label{eq:pflowerbound_2}
We evaluate this single integral and use Jensen's inequality to obtain
\begin{align}
- \int_0^p g(0,y) dy \, & = \,
2m\log(2m) \. + \. p\log(p) \. - \. (2m+p)\log(2m+p) \notag \\
&\geq \, (2m+p)\bigl( \log(2m+p) \. - \. \log 2\bigr) \. -
  \. (2m+p)\log(2m+p). \label{eq:pflowerbound_3}
\end{align}
Combining \eqref{eq:keylowerbound},\eqref{eq:pflowerbound_1}, \eqref{eq:pflowerbound_2}, and \eqref{eq:pflowerbound_3} we have
\[
\log F(m,p) \. \geq \. (m+p)^2 \fr(m/(m+p)) \. + \. (2m+p)\left( \log(2m+p) \.
- \.  \log(2)\right) \. - \. (2m+p)\log(2m+p).
\]
The RHS is greater than or equal to \ts $(m+p)^2 \fr(m/(m+p)) -2(m+p)$,
as desired.
\end{proof}

\medskip

\subsection{Optimizing constants}
Our goal is to show that $\lim_{n\to
  \infty} \log_2 v(n)/n^2$ is a constant. In the previous lemma we gave bounds on the error of approximating
$\log F(m,n-m)$ by $n^2\fr(x)$ where $x = m/n$ in $[0,1]$. We now find a unique constant
$\gamma$ such that $\fr(x)+\gamma x^2$ has a unique maximum over $x \in [0,1)$.

\begin{lemma}\label{lemma:max_r}
There exist a unique $\gamma >0$ and $\al \in (0,1)$, such that:
\begin{itemize}
\item[(1)] $2\gamma \al + \fr'(\al) \ts =\ts 0$,
\item[(2)] $\gamma \al^2 + \fr(\al) \ts = \ts \gamma$ \ts with \ts $2\gamma + r''(\al) \ts \leq \ts 0$.
\end{itemize}
And for this $\gamma$, the maximum of \ts $\fr(x) + \gamma x^2$ \ts over $x\in [0,1)$ is achieved at
the given $\al$, and the value is precisely $\gamma$. That is,
\[
\max_{x\in [0,1)}(\fr(x) + \gamma x^2) \. = \. \fr(\al) + \gamma \al^2 \. = \. \gamma.
\]
\end{lemma}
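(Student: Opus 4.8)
The plan is to reduce everything to the single-variable function $\phi(x) := \fr(x)/(1-x^2)$ on $[0,1)$. Note that $\fr(0)=\fr(1)=0$ and $\fr'(1)=0$, so $\phi$ extends continuously to $[0,1]$ with $\phi(0)=\phi(1)=0$ (Taylor expand at $x=1$). A direct computation gives
\[
\phi'(x) \. = \. \frac{h(x)}{(1-x^2)^2}\ts, \qquad \text{where} \quad h(x) \. := \. 2x\ts\fr(x) \. + \. (1-x^2)\ts\fr'(x)\ts,
\]
which is precisely the expression whose nontrivial root defines $\al$. So the whole lemma follows once we prove that $h$ has a unique zero $\al$ in $(0,1)$, with $h>0$ on $(0,\al)$ and $h<0$ on $(\al,1)$: then $\phi$ is strictly increasing on $[0,\al]$ and strictly decreasing on $[\al,1)$, so $\gamma := \phi(\al) = \fr(\al)/(1-\al^2)$ is the unique maximum of $\phi$ on $[0,1)$, and $\gamma>0$ since $\phi(0)=0$. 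The claim about $\fr(x)+\gamma x^2$ is then immediate: for $x\in[0,1)$ one has $\fr(x) + \gamma x^2 - \gamma = (1-x^2)\bigl(\phi(x)-\gamma\bigr) \leq 0$, with equality only at $x=\al$; and items (1) and (2) are just the first- and second-order optimality conditions at this interior maximum, rewritten via $\gamma(1-\al^2)=\fr(\al)$.

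To handle $h$, I would first simplify it. Differentiating \eqref{eq:defr}, the polynomial terms cancel and one gets $\fr'(x) = 2x\log x + (1-x)\log(1-x) - (1+x)\log(1+x) + 2\log 2$; expanding $h = 2x\ts\fr + (1-x^2)\ts\fr'$ and collecting (using $1-x^2=(1-x)(1+x)$) gives the clean form
\[
h(x) \. = \. 2x\log x \. + \. (1-x)^2\log(1-x) \. - \. (1+x)^2\log(1+x) \. + \. 2(1+x^2)\log 2\ts.
\]
The technical heart of the lemma is to show $h$ changes sign exactly once on $(0,1)$, and I would do this by differentiating twice. The linear and constant terms cancel again in $h'$, giving $h'(x) = 2\bigl[\log x - (1-x)\log(1-x) - (1+x)\log(1+x) + 2x\log 2\bigr]$, and one more derivative yields
\[
h''(x) \. = \. \frac{2}{x} \. + \. 2\log\frac{4(1-x)}{1+x}\ts.
\]
The equation $h''(x)=0$ reads $\tfrac1x = \log\tfrac{1+x}{4(1-x)}$, whose left side decreases from $+\infty$ to $1$ on $(0,1)$ while its right side increases from $-2\log2$ to $+\infty$; hence $h''$ vanishes exactly once, so $h'$ is first increasing then decreasing. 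Since $h'(0^+)=-\infty$ and $h'(1^-)=0$ with $h'$ decreasing near $1$, $h'$ is positive on a left neighbourhood of $1$, so its peak value is positive and $h'$ has a single zero $x_1\in(0,1)$ with $h'<0$ on $(0,x_1)$ and $h'>0$ on $(x_1,1)$. Therefore $h$ decreases on $(0,x_1)$ and increases on $(x_1,1)$; as $h(0)=2\log2>0$ and $h(1)=0$, monotonicity forces $h<0$ on $(x_1,1)$ and a unique zero $\al\in(0,x_1)$ with $h>0$ on $(0,\al)$ and $h<0$ on $(\al,1)$, exactly as required.

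Finally, uniqueness of the pair $(\gamma,\al)$: any pair satisfying (1) and (2) has $\al\neq0$ (because $\fr'(0)=2\log2\neq0$ forbids $\al=0$ in (1)), so eliminating $\gamma$ between (1) and (2) gives $2\al\ts\fr(\al)+(1-\al^2)\ts\fr'(\al)=h(\al)=0$; by the sign analysis above this forces $\al$ to be the unique root of $h$ in $(0,1)$, and then $\gamma=\fr(\al)/(1-\al^2)$ is determined. I expect the sign analysis of $h$ to be the only genuinely delicate part — the point being that repeated differentiation strips $h$ down to the manifestly monotone comparison of $1/x$ against $\log\tfrac{1+x}{4(1-x)}$ — while the passage from that analysis to (1), (2), and the optimality claim is routine.
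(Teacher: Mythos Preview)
Your proof is correct and follows essentially the same path as the paper: both eliminate $\gamma$ from conditions (1)--(2) to arrive at the equation $h(x)=q(x)=0$, locate its unique root $\alpha\in(0,1)$, and then recover $\gamma=\fr(\alpha)/(1-\alpha^2)$. Your treatment is in fact more complete than the paper's: where the paper appeals to a plot for uniqueness of the root and verifies only a local second-derivative condition at~$\alpha$, your repeated-differentiation analysis of $h$ supplies the full analytic argument, and your device of writing $\fr(x)+\gamma x^2-\gamma=(1-x^2)\bigl(\phi(x)-\gamma\bigr)$ with $\phi=\fr/(1-x^2)$ cleanly upgrades the local statement to the global maximum on $[0,1)$.
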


\begin{proof}
First, it is straightforward to show that $\lim_{x\to 0} \fr(x)  =
\lim_{x\to 1} \fr(x) =0$ and that $\fr(x)>0$ for $x\in(0,1)$ (see plot of
$\fr(x)$ on the left of Figure~\ref{fig:plots}).


Let $\al$ be a solution to the equation $q(x)=0$ where
\begin{align*}
q(x) &:=\,\, \fr(x) 2x + \fr'(x)(1-x^2)\\
  &= (1-x)^2\log(1-x) - (1+x)^2\log(1+x) + 2x\log(x) + 2(1+x^2)\log(2).
\end{align*}
This function on the RHS above has one root $\al=0.4331818312..$ and the other is $x=1$, as easily seen from the plot, but also can be shown analytically.
Then we set $$\gamma := \frac{\fr(\al)}{1-\al^2} = -\frac{\fr'(\al)}{2\al},$$ so $\gamma$ and $\al$ now satisfy conditions (1) and (2).

\begin{figure}
\includegraphics[width=1.6in]{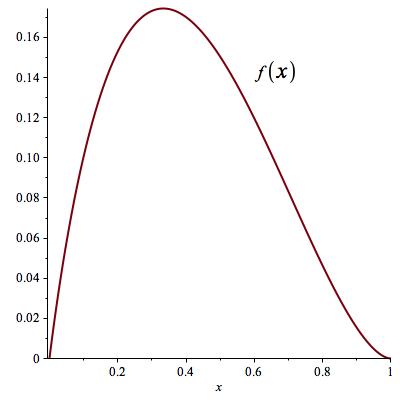}\quad \includegraphics[width=1.6in]{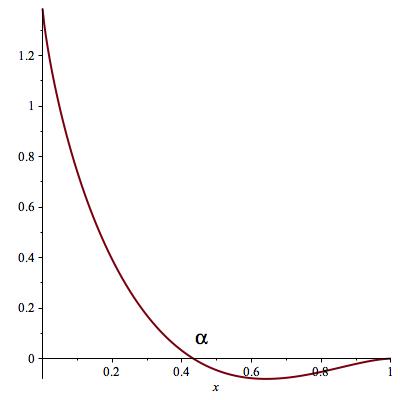}\quad \includegraphics[width=1.6in]{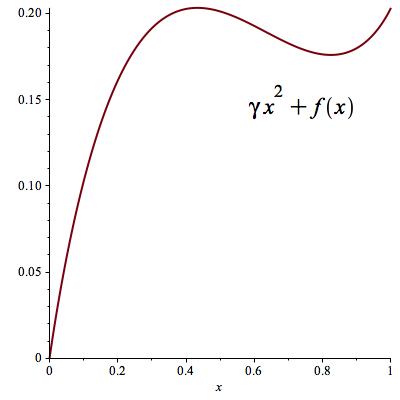}
\caption{Graphs of the functions $\fr(x)$, $q(x)$ and $\gamma x^2+\fr(x)$, on $(0,1)$.}
\label{fig:plots}
\end{figure}

Next, we see that $\gamma = \fr(\al)/(1-\al^2) \approx
0.2032558981$. To prove that this is indeed a maximum for $\fr(x)+\gamma
x^2$, we check that the second derivative,  $d^2(\gamma x^2 +
\fr(x))/dx^2 =2\gamma + r''(x)<0$ for $x=\al$. We have that $r''(x) = \log
(x^2/(1-x^2))$. Since $\al \leq 0.45$, we have that $
x^2/(1-x^2) <0.26$ and so $r''(\al) <-1.3<-2\gamma $ and so the
value is a local maximum and by condition~(2) it is equal to $\gamma$.
\end{proof}

\medskip

\subsection{Proof of Theorem~\ref{thm:main-limit}}
The theorem follows immediately from the following lemma.

\begin{lemma}\label{l:limit}
For all $n\ge 2$ we have:
\[
\bigl| \log v(n) - \gamma n^2\bigr| \. \leq \. 4 n\ts.
\]
Conversely, suppose for a layered permutation $w(b)\in S_n$ we have
\[
\bigl| \log \Ups_w - \gamma n^2\bigr| \. \leq \. 4 n\ts.
\]
Then $b=(....,b_2,b_1)$, s.t. $b_i \sim (1-\al)\al^{i-1} n$ for all
fixed $i\ge 1$.
\end{lemma}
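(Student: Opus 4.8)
The plan is to prove both statements together by induction on $n$, using the recursion \eqref{eq:defvn}, namely $v(n) = \max_{1\le p\le n}\{v(n-p)F(n-p,p)\}$, together with the precise bound in Proposition~\ref{p:bdF} and the optimization result of Lemma~\ref{lemma:max_r}. First I would establish the upper bound $\log v(n) \le \gamma n^2 + 4n$. Writing $x = (n-p)/n$, Proposition~\ref{p:bdF} gives $\log F(n-p,p) \le n^2 \fr(x)$, and by induction $\log v(n-p) \le \gamma(n-p)^2 + 4(n-p) = \gamma n^2 x^2 + 4nx$. Summing, $\log v(n-p) + \log F(n-p,p) \le n^2(\gamma x^2 + \fr(x)) + 4nx \le \gamma n^2 + 4n$, where the last inequality uses $\max_{x\in[0,1)}(\fr(x)+\gamma x^2) = \gamma$ from Lemma~\ref{lemma:max_r} and $x < 1$. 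One must handle the base cases and the boundary value $p = n$ (where $x = 0$, $\fr(0) = 0$, $v(0) = 1$) separately, but these are elementary. For the lower bound $\log v(n) \ge \gamma n^2 - 4n$, I would simply choose $p = \lceil (1-\al)n\rceil$ so that $x = (n-p)/n \approx \al$; then by induction and the lower bound in Proposition~\ref{p:bdF}, $\log v(n) \ge \log v(n-p) + \log F(n-p,p) \ge \gamma(n-p)^2 - 4(n-p) + n^2\fr(x) - 2n$. Since $x\to\al$ and $\fr$ is smooth with $\fr(\al)+\gamma\al^2 = \gamma$, a Taylor estimate (using that $\al$ is a critical point of $\fr(x)+\gamma x^2$, so the linear term vanishes and the error is $O(1)$ from the rounding of $p$) gives $\gamma n^2 x^2 + n^2\fr(x) \ge \gamma n^2 - O(1)$, and the remaining $-4(n-p) - 2n \ge -4n$ bookkeeping closes the induction. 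Care is needed to make the constant work out to exactly $4n$ rather than a larger multiple; this may require taking $n$ large in the inductive step and absorbing small $n$ into the hypothesis $n\ge 2$, or slightly strengthening the inductive hypothesis.

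For the converse (the limit-shape statement), suppose $w(b) = w(\ldots,b_2,b_1) \in S_n$ satisfies $|\log\Ups_w - \gamma n^2| \le 4n$. By Proposition~\ref{prop:red}, $\log\Ups_w = \log\Ups_{w(\ldots,b_2)} + \log F(n-b_1,b_1)$, and $w(\ldots,b_2) \in S_{n-b_1}$, so by the first part (applied to $v(n-b_1) \ge \Ups_{w(\ldots,b_2)}$) and Proposition~\ref{p:bdF}, with $x = (n-b_1)/n$,
\[
\gamma n^2 - 4n \le \log\Ups_w \le \bigl(\gamma(n-b_1)^2 + 4(n-b_1)\bigr) + n^2\fr(x) = n^2\bigl(\gamma x^2 + \fr(x)\bigr) + 4nx.
\]
Hence $\gamma x^2 + \fr(x) \ge \gamma - 8/n$. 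Since $x\mapsto \gamma x^2+\fr(x)$ has a unique maximum at $x=\al$ on $[0,1)$ with negative second derivative there (Lemma~\ref{lemma:max_r}), and the function is bounded away from $\gamma$ outside any neighborhood of $\al$, this forces $x = (n-b_1)/n \to \al$, i.e. $b_1 \sim (1-\al)n$. Then $\Ups_{w(\ldots,b_2)} \ge \Ups_w / F(n-b_1,b_1) \ge 2^{\gamma n^2 - 4n} / 2^{n^2\fr(x)}$; combined with $v(n-b_1) \le 2^{\gamma(n-b_1)^2 + 4(n-b_1)}$ one checks that $\log\Ups_{w(\ldots,b_2)} \ge \gamma(n-b_1)^2 - 4(n-b_1) + o((n-b_1)^2)$, which is not quite the hypothesis needed to recurse. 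The clean fix is to strengthen the claim to: if $\log\Ups_{w(b)} \ge \gamma n^2 - cn$ for a fixed constant $c$, then $b_1 \sim (1-\al)n$ and $\Ups_{w(\ldots,b_2)} \ge 2^{\gamma(n-b_1)^2 - c'(n-b_1)}$ with $c'$ depending on $c$ but not $n$; then iterate, peeling off one run at a time. Each application of the "uniqueness of the maximum, with quadratic decay" argument upgrades the defect from $O(n)$ to control the relative location of $b_i$ inside $S_{b_i + b_{i+1} + \cdots}$, giving $b_i \sim (1-\al)\al^{i-1}n$ for each fixed $i$ by a straightforward induction on $i$.

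The main obstacle is the bookkeeping of error terms across the induction: the additive $O(n)$ slack in Proposition~\ref{p:bdF} and in the inductive hypothesis must be shown not to accumulate into something larger than $4n$ (resp. not to degrade into $o(n^2)$ in a way that destroys the limit-shape argument). This is the step where I expect to spend the most care — in particular, verifying that the $-2n$ in Lemma~\ref{l:LB} plus the inductive $-4(n-p)$ term genuinely telescopes to at worst $-4n$ for the optimal choice of $p$, and that for the converse the quantitative version of "being near the maximum of $\fr(x)+\gamma x^2$" (via the negative second derivative at $\al$, i.e. $(n-b_1)/n = \al + O(1/\sqrt n)$) is strong enough to pass to the next level. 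Everything else — the explicit double-integral formula \eqref{eq:origin_f}, the convexity/monotonicity of $g$, and the analysis of $\fr$ and $q$ — is already supplied by the earlier lemmas and needs only to be invoked.
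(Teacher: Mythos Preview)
Your plan is correct and is essentially the same as the paper's own proof: both arguments prove the two-sided bound by induction via the recursion \eqref{eq:defvn}, using the upper bound of Proposition~\ref{p:bdF} together with $\max_{x\in[0,1)}(\fr(x)+\gamma x^2)=\gamma$ for the upper bound, and the specific choice $m\approx\alpha n$ together with the lower bound of Proposition~\ref{p:bdF} and the identity $\fr(\alpha)+\gamma\alpha^2=\gamma$ for the lower bound; the paper closes the induction with exactly the bookkeeping $-4(n-p)-2n=-(2+4\alpha)n\ge -4n$ (using $\alpha\le 1/2$) that you flagged as the thing to check. For the converse the paper is much terser than you (it simply says ``this bound is attained when $b_1\sim(1-\alpha)n$; recursively we obtain $b_i\sim(1-\alpha)\alpha^{i-1}n$''), whereas your sketch spells out the mechanism (near-maximality of $\gamma x^2+\fr(x)$ forces $x\to\alpha$, then peel off one layer and repeat); since the claim is only for each fixed $i$, the finite degradation of the constant $c\to c'$ you noticed is harmless, and your fix is the right one.
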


\begin{proof}
We proceed by induction to show that \ts $|\log v(n) - \gamma n^2| \leq 4 \ts n$
\ts holds for all $n\ge 2$.  The base cases $n=2$ can be checked directly
(see exact values in the appendix).

We start with \eqref{eq:defvn} and use the induction hypothesis and the upper bound of
Proposition~\ref{p:bdF} to obtain
\begin{align*}
\log v(n) \. &= \. \max_{m < n} \ts \bigl(\log v(m) + \log F(m,n-m) \bigr)\\
& \leq \. \max_{m<n} \ts \bigl( \gamma m^2 + \log F(m,n-m) +2m \bigr),\\
& \leq  n^2\max_{x\in[0,1)} \bigl(\fr(x) + \gamma x^2\bigr) \. + \. 2 \ts n.
\end{align*}
By Lemma~\ref{lemma:max_r}, the maximum value of
$\fr(x)+\gamma x^2$ is equal to~$\gamma$. Thus, the above inequality becomes
\[
\log v(n) \. \leq \. \gamma n^2 \ts +\ts 2\ts n.
\]
This maximum is achieved when $x=\al$, i.e.\ when $m=n\al$ and $p=b_1 = (1-\al)n$. By the
definition of~$v(n)$, for this value of~$m$ we have that
\[
\log v(n) \. \geq \. \log v(n\al) \ts +\ts \log F(n\al,n-n\al).
\]
By the induction hypothesis and the lower bound of
Proposition~\ref{p:bdF}, the above inequality becomes
\begin{align*}
\log v(n) \. & \geq \. \bigl(\gamma n^2\al^2 \ts - \ts 4 \ts n\al \bigr) \ts +\ts (n^2 \fr(\al) \ts -\ts 2\ts n) \\
 &= \. \gamma n^2 \ts  - \ts 2\ts (1\ts+\ts 2\ts \al)\ts n\ts \geq \gamma n^2 \ts - \ts 4\ts n.
\end{align*}
Here we again used the fact that \ts $\fr(\al) + \gamma \al^2 = \gamma$ and that $\al \leq 1/2$.
In summary,
$$\bigl|\log v(n) - \gamma n^2 \bigr| \. \leq \. 4 \ts n ,
$$
and this bound is attained when \ts $b_1 \sim (1-\al)n$. Recursively,
we obtain \ts $b_i \sim (1-\al)\al^{i-1}n$ for every fixed $i=2,3,\ldots$.
\end{proof}

\begin{remark}{\rm
Note that the appendix shows rather slow rate of convergence
for \ts $h(n) := \frac{1}{n^2} \ts \log_2 v(n)$, giving only
$h(300)\approx 0.2904$.  This suggests that \ts
$h(n) = \ga/(\log 2) - 1/n - o(1/n)$, so that the bound in
Lemma~\ref{l:limit} is quite sharp.
}
\end{remark}

\bigskip

\section{Final remarks}\label{sec:finrem}

\subsection{} \label{ss:finrem-limit}
Stanley's Conjecture~\ref{conj:stan} remains open but is very
likely to hold. Denote by
$$
a(n) \. = \. \sum_{w \in S_n} \. \Ups_w
$$
the total number of rc-graphs (pipe dreams) of size~$n$.  Since
$$
u(n) \. \le \. a(n) \. \le \. n! \ts u(n),
$$
we conclude that it suffices to prove the asymptotics result
for~$a_n$. This suggests connections to counting general tilings
(see e.g.~\cite{AS}), as pipe dreams can be viewed as tilings of
a staircase shape with two types of tiles, but with one global
condition (strains can intersect at most once).  The problem is
especially similar to counting \emph{Knutson-Tao puzzles}
enumerating the \emph{Littlewood--Richardson coefficients},
whose maximal asymptotics was recently studied in~\cite{PPY}.


By analogy with the tilings, one can ask if $u(n)$ satisfies some sort of
super-multiplicativity property.  Formally, let $w\otimes 1^c$ denote
the \emph{Kronecker product permutation} of size $\ts c\ts n\ts$,
whose permutation matrix equals the Kronecker
product of the permutation matrix $P_w$ and the identity~$I_c$
(see~\cite{MPP}).

\begin{conjecture}
For $w\in S_n$, we have \ts $\Ups_{w \otimes 1^2} \ts \geq \ts \Ups_w^4$.
\end{conjecture}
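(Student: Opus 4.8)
The plan is to prove the conjecture $\Ups_{w \otimes 1^2} \ge \Ups_w^4$ by exhibiting enough pipe dreams (rc-graphs) for $w \otimes 1^2$ that are built by assembling four copies of pipe dreams for $w$. Recall that $w \otimes 1^c$ has permutation matrix $P_w \otimes I_c$, so geometrically the permutation acts on a $cn \times cn$ board partitioned into an $n \times n$ array of $c \times c$ blocks, with each block on the ``graph'' of $w$ being the identity block $I_c$. For $c = 2$, I would index positions as $(i,\epsilon)$ with $1 \le i \le n$ and $\epsilon \in \{0,1\}$, interleaving the two copies. The key idea: a reduced word of $w \otimes 1^2$ can be obtained from a reduced word of $w$ by replacing each simple transposition $s_a$ by a short reduced factor that moves a $2 \times 2$ identity block past another, namely $s_{2a} s_{2a-1} s_{2a+1} s_{2a}$ (a length-four braid-like segment), and these factors interact just like the $s_a$'s do. This is exactly the ``$4$'' in the exponent: each inversion of $w$ becomes four inversions of $w \otimes 1^2$, and $\ell(w \otimes 1^2) = 4\ell(w)$.

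The cleanest route uses the product/embedding structure rather than raw word combinatorics. First I would establish the factorization-type statement: if $w = s_{a_1} \cdots s_{a_\ell}$ is reduced, then
\[
w \otimes 1^2 \. = \. \prod_{t=1}^{\ell} \bigl(s_{2a_t}\ts s_{2a_t - 1}\ts s_{2a_t+1}\ts s_{2a_t}\bigr)
\]
is a reduced decomposition, by a length count and a direct check that the RHS has permutation matrix $P_w \otimes I_2$. Then, crucially, I would show that distinct reduced words of $w$ yield distinct reduced words of $w \otimes 1^2$ under this substitution, and moreover that for a \emph{fixed} reduced word of $w$, the number of reduced words of $w \otimes 1^2$ obtained by shuffling the four-letter blocks among themselves (using commutations) is at least $4$ raised to something — but this is the wrong bookkeeping for Macdonald's identity. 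Instead the right tool is the \emph{pipe dream} interpretation: I would construct an injection from $4$-tuples of pipe dreams $(D_1, D_2, D_3, D_4)$ for $w$ into pipe dreams for $w \otimes 1^2$, by placing scaled/interleaved copies of the $D_j$ into the staircase for $w \otimes 1^2$, using the block structure so that the four copies occupy essentially disjoint regions and the global ``two pipes cross at most once'' condition is automatically satisfied across copies. Injectivity comes from being able to read off each $D_j$ from a designated sub-staircase of the image. Since $\Ups_{w \otimes 1^2}$ counts all pipe dreams for $w \otimes 1^2$ and there are at least $\Ups_w^4$ in the image, we conclude.

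I would carry out the steps in this order: (i) fix coordinates and verify $\ell(w \otimes 1^2) = 4\ell(w)$ and that the reduced-word substitution above is valid; (ii) describe the staircase region for $w \otimes 1^2$ and identify four sub-regions, each an affinely embedded copy of the staircase region relevant to $w$, using the $2\times2$-block / Kronecker structure (here one may lean on the treatment of Kronecker product permutations in \cite{MPP}); (iii) define the map $(D_1,D_2,D_3,D_4) \mapsto D$ placing crossing tiles and verifying that the resulting pipe dream represents $w \otimes 1^2$ — the pipe-network global condition needs checking, namely that no two pipes that already crossed inside one copy are forced to cross again because of a second copy; (iv) check injectivity by exhibiting the retraction $D \mapsto (D_1,D_2,D_3,D_4)$; (v) conclude via the counting interpretation of $\Ups$.

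The main obstacle I expect is step (iii): arranging the four copies so that the \emph{global} non-repeated-crossing constraint on pipe dreams is respected. Within a single copy the constraint holds because $D_j$ is a valid pipe dream; the danger is interference between copies — two pipes belonging to different copies, or a pipe that must traverse two copies, could be routed so as to cross twice. I believe this can be avoided by choosing an ordering of the four copies along the anti-diagonal (an ``upper-triangular'' block placement mirroring $P_w \otimes I_2$ itself, where the two rows/columns of each identity block are adjacent) so that any pipe passes through at most one copy in a ``crossing-active'' way and merely travels straight (via elbow tiles only) through the others; then repeated crossings are impossible. Making this routing precise, and confirming the resulting permutation is exactly $w \otimes 1^2$ rather than something with extra inversions, is the delicate part and is where I would spend the bulk of the argument. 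A fallback, if the direct pipe-dream construction proves unwieldy, is to argue entirely at the level of \cite[Cor.~2.4.6]{Man}-style Schubert polynomial products: write $w \otimes 1^2$ as a suitable product of permutations each of which is a ``scaled'' $w$ with zero blocks padded in, apply the product rule for Schubert polynomials of permutations that concatenate, and principal-evaluate; the inequality (rather than equality) would then absorb the cross terms. The former is more conceptual and I would attempt it first.
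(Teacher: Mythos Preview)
This statement is labelled a \emph{conjecture} in the paper and is not proved there; the authors only report a computer check for all $w\in S_n$ with $n\le 5$. There is therefore no proof in the paper to compare against, and your proposal must stand or fall on its own.

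The plan has a genuine gap precisely at the step you yourself flag. Everything in step~(i) is fine: $\ell(w\otimes 1^2)=4\ts\ell(w)$, and the substitution $s_a\mapsto s_{2a}s_{2a-1}s_{2a+1}s_{2a}$ does send reduced words of $w$ to reduced words of $w\otimes 1^2$. But steps~(ii)--(iii) assert the existence of four affinely embedded sub-staircases inside the staircase for $S_{2n}$, into which one may drop four \emph{independent} pipe dreams $D_1,\dots,D_4$ for $w$ and obtain a valid pipe dream for $w\otimes 1^2$. No such decomposition is exhibited, and the arithmetic is against you: the staircase for $S_{2n}$ has $\binom{2n}{2}=2n^2-n$ cells, while four disjoint copies of the staircase for $S_n$ already require $4\binom{n}{2}=2n^2-2n$ cells, leaving only $n$ cells for all the routing between them. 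In fact the natural construction---blowing up each cell of a single pipe dream $D$ for $w$ to a $2\times 2$ block (a cross to four crosses, an elbow to four elbows) and filling the $n$ leftover antidiagonal cells with elbows---does produce a pipe dream for $w\otimes 1^2$, but it consumes exactly one $D$, not four, and the $2\times2$ cross-block admits no local freedom since all four crossings are forced. This yields only $\Ups_{w\otimes 1^2}\ge \Ups_w$. You give no mechanism by which three further independent choices of pipe dream for $w$ can be encoded while preserving the global no-double-crossing constraint; absent an explicit placement map, step~(iii) is a hope rather than an argument.

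The fallback is also not available as stated. The product rule \cite[Cor.~2.4.6]{Man} (equivalently \cite[(4.6)]{Mac}) factors $\mathfrak{S}_{u\times v}$ for the \emph{block-diagonal} product $u\times v$; the Kronecker product $w\otimes 1^2$ has interleaved permutation matrix $P_w\otimes I_2$ and does not decompose as $u\times v$ for nontrivial $u,v$ in general. There is no known identity writing $\mathfrak{S}_{w\otimes 1^2}$ as a product of four shifted copies of $\mathfrak{S}_w$, so the inequality cannot be read off by principal evaluation of a polynomial factorization. In short, your outline identifies the right invariants ($\ell$ scales by~$4$, reduced words embed) but does not supply the injection that would prove the inequality; as far as the paper is concerned, the statement remains open.
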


We verified the conjecture for all $w\in S_n$ where $n\le 5$, but perhaps
more computational evidence would be helpful.

\smallskip

\subsection{} \label{ss:finrem-exp}
Similarly, the Merzon--Smirnov Conjecture~\ref{conj:main} remains open.
In our opinion, the numerical evidence in favor of the conjecture is
insufficient, and it would be interesting to verify it for larger~$n$.
To speedup the computation, perhaps, there are large classes of
permutations $u\in S_n$ which can be proved to be non-maximal, i.e.\
there exists \ts $w\in S_n$, s.t. $\Ups_u\leq \Ups_{w}$.
Such permutations can then be ignored in the exhaustive search.

In fact, Prop.~6.5 in~\cite{MPP} gives explicit
constructions of large families of permutations $w\in S_n$,
for which  $\log \Ups_w = \Theta(n)$.  These permutations are
very far from being layered (in the transposition distance),
suggesting that if true, proving Conjecture~\ref{conj:main}
might not be easy.

\smallskip

\subsection{}
In \cite{St}, Stanley also considered the case when $\Ups_w$ is
small. It is well known that $\Ups_w=1$ if and only if $w$ is
{\em dominant}~\cite{Man}, i.e.\ $132$-avoiding. Stanley conjectured
that $\Ups_w=2$ if and only if $w$ has exactly one instance of the
pattern~$132$. This was recently proved by Weigandt~\cite{W}, who also
showed that $\Ups_w-1$ is greater than or equal the number of
instances of the pattern $132$ in~$w$.

This suggests the problem of finding permutations where
the number of patterns $132$ is maximal.  In the field of
pattern avoidance, this problem can be rephrased as asking for
permutations $w\in S_n$ with maximal \emph{packing density}
of the pattern $132$, see~\cite[$\S$8.3.1]{Kit}.  The
solution due to Stromquist is extremely well understood,
and has been both refined and generalized,
see~\cite{A+,BSV,HSV}, \cite[$\S$5.1]{Price} and
\cite[\href{https://oeis.org/A061061}{A061061}]{OEIS}.
The maximal packing density is attained at a layered
permutation \ts $w(b_1,b_2,\ldots)$, where the runs $b_i$ have a
geometric distribution:
$$
b_i\. \sim \. \rho\ts (1-\rho)^{i-1} \ts n, \ i=1,2,\ldots \quad \text{where} \quad
\rho \. =  \. \frac{\sqrt{3}-1}{2} \. \approx \.  0.366025
$$

While, of course, $v(n)$ are attained at somewhat different layered
permutations, the similarities to this work are rather striking
and go beyond coincidences.  They are rooted in the recursive nature
of optimal permutations in both cases, which are solutions of
similar (but different!) maximization problems.


\smallskip

\subsection{}\label{ss:finrem-rc}
The bounds for $u(n)$ from Theorem~\ref{thm:bound_u(n)} are obtained
from the Cauchy identity of Schubert polynomials which gives
\begin{equation} \label{eq:CauchySchubert}
\sum_{w_0 = v^{-1}u}\Ups_u \Ups_v = 2^{\binom{n}{2}}.
\end{equation}
One could then ask for large values of $\Ups_w\Ups_{w w_0^{-1}}$. Let $u'(n) := \max_{w\in S_n} \{ \Ups_w\cdot\Ups_{w w_0^{-1}}
\}$. The table below has the values of $u'(n)$ for $n=2,\ldots,9$ and
the permutations $w$ (up to multiplying by $w_0^{-1}$) that achieve
that value $u'(n)$.
\[
\begin{array}{rrc}
\hline
n & u'(n) & w \\ \hline
3 & 2 & 132\\
4 & 6 & 1423\\
5 & 33 & 15243\\
6 & 286 & 162534\\
7 & 4620 & 1736254\\
8 & 162360 &  18527364 \\
9 & 9057090  & 195283746 \\ \hline
\end{array}
\]
Note that for a layered permutation $w(b)$, the permutation $w(b)w_0^{-1}$ is dominant and so $\Ups_{w(b) w_0^{-1}}=1$.

There is a combinatorial proof of \eqref{eq:CauchySchubert} by
Bergeron and Billey \cite{BB} involving taking a double {\em $rc$-graph} of
$w_0$ ($2^{\binom{n}{2}}$ many) and reading from each half of it permutations $u$
and $v$ satisfying $w_0 = v^{-1}u$. All such double $rc$-graphs of $w_0$ can be
obtained from an initial double $rc$-graph via
certain local transformations (see \cite[Sec.~4]{BB}). One can use these local
transformations in a Markov chain to obtain a random double $rc$-graph
of $w_0$ and from it read off a permutation~$u$; see
Figure~\ref{fig:random-permdrc}. We conjecture that
the permutation matrix of random permutations~$u$ has
a parabolic frozen region.

The second permutation in Figure~\ref{fig:random-permdrc} is obtained
by running a Markov chain for $5\cdot 10^9$ local moves
on a double $rc$-graph of $v^{-1}u=w_0 \in S_{50}$,
described in~\cite[Sec.~4]{BB}.  Half of the resulting
double $rc$-graph given in Figure~\ref{fig:random-permdrc-app}
is then converted into a permutation $u\in S_{50}$.

\begin{figure}
\begin{center}
\includegraphics{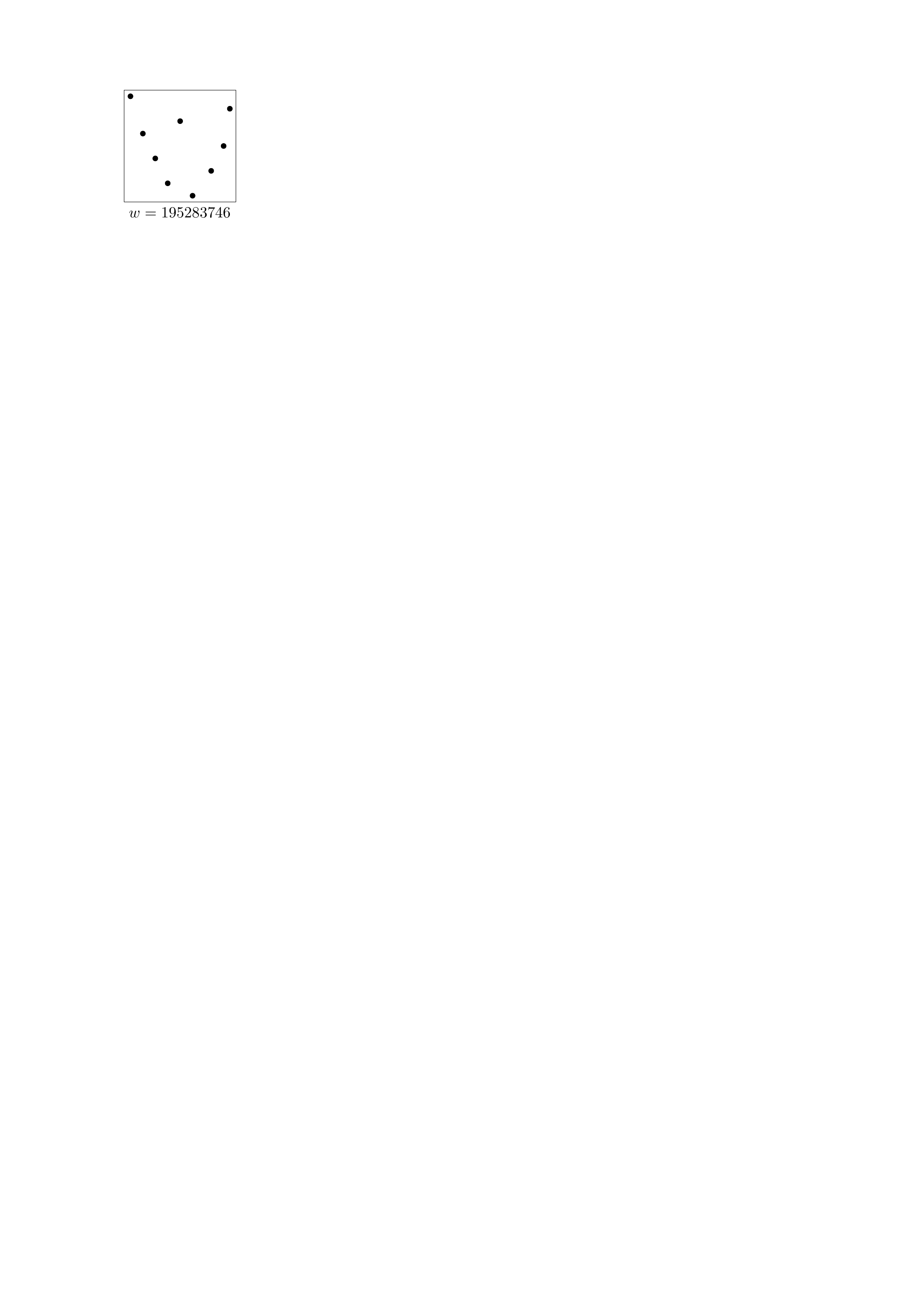}\qquad
\qquad\includegraphics[scale=0.32]{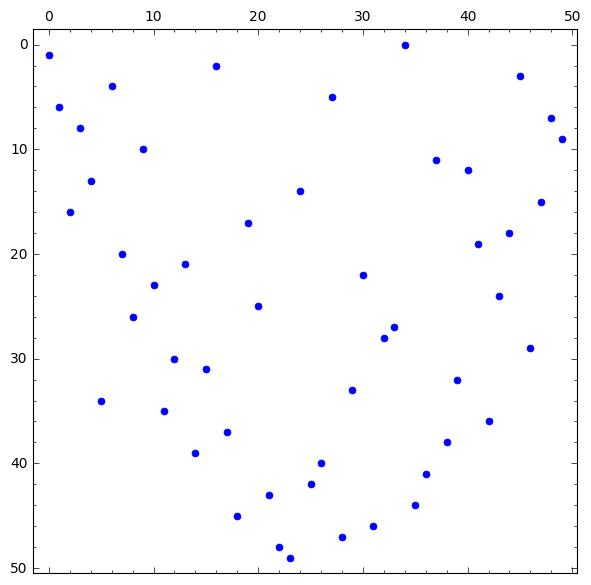}
\caption{Permutation matrices  of $195283746$ and of a permutation
$u\in S_{50}$ from the random double $rc$-graph.}
\label{fig:random-permdrc}
\end{center}
\end{figure}

\vskip.7cm

\subsection*{Acknowledgements}
We are grateful to Nantel Bergeron, Sara Billey, Grigory Merzon, Evgeny Smirnov, Richard Stanley
and Damir Yeliussizov for helpful conversations.  We are especially
thankful to Hugh Thomas for sharing the data of layered permutations.
The calculations of $\Ups_w$ done in this paper were made using Sage and
its algebraic combinatorics features developed by the Sage-Combinat
community~\cite{sage-combinat}.
The second and third authors were partially supported by the~NSF.


\newpage

\newpage

\appendix
{\normalsize
\subsection*{\large Table of exact values for $n\le 300$}
Below we present table of tuples $b$ of layered permutations $w(b)$
  maximizing $v(n)$.   The third column is \ts
  $f(n):= \frac{1}{n^2} \ts \log_2 v(n)$. }

\bigskip

{\small
\begin{minipage}{0.5\textwidth}
$$
\begin{array}{rrr}
\hline
n & \quad \ (\ldots,b_2,b_1) & \qquad \quad f(n)\\ \hline
1&(1)&0.000000\\
2&(1, 1)&0.000000\\
3&(1, 2)&0.111111\\
4&(1, 3)&0.145121\\
5&(1, 1, 3)&0.152294\\
6&(1, 1, 4)&0.177564\\
7&(1, 2, 4)&0.191149\\
8&(1, 2, 5)&0.206317\\
9&(1, 2, 6)&0.213824\\
10&(1, 3, 6)&0.220771\\
11&(1, 3, 7)&0.227005\\
12&(1, 3, 8)&0.229879\\
13&(1, 1, 3, 8)&0.233769\\
14&(1, 1, 4, 8)&0.237048\\
15&(1, 1, 4, 9)&0.241677\\
16&(1, 1, 4, 10)&0.244446\\
17&(1, 2, 4, 10)&0.246954\\
18&(1, 2, 4, 11)&0.249509\\
19&(1, 2, 5, 11)&0.251966\\
20&(1, 2, 5, 12)&0.254240\\
21&(1, 2, 5, 13)&0.255575\\
22&(1, 2, 6, 13)&0.257354\\
23&(1, 2, 6, 14)&0.258685\\
24&(1, 3, 6, 14)&0.260063\\
25&(1, 3, 6, 15)&0.261360\\
26&(1, 3, 7, 15)&0.262425\\
27&(1, 3, 7, 16)&0.263673\\
28&(1, 3, 7, 17)&0.264435\\
29&(1, 3, 8, 17)&0.265233\\
30&(1, 3, 8, 18)&0.266034\\
31&(1, 1, 3, 8, 18)&0.266811\\
32&(1, 1, 3, 8, 19)&0.267619\\
33&(1, 1, 4, 8, 19)&0.268165\\
34&(1, 1, 4, 8, 20)&0.268973\\
35&(1, 1, 4, 9, 20)&0.269675\\
36&(1, 1, 4, 9, 21)&0.270460\\
37&(1, 1, 4, 9, 22)&0.270978\\
38&(1, 1, 4, 10, 22)&0.271548\\
39&(1, 1, 4, 10, 23)&0.272081\\
40&(1, 2, 4, 10, 23)&0.272523\\
41&(1, 2, 4, 10, 24)&0.273065\\
42&(1, 2, 4, 11, 24)&0.273453\\
43&(1, 2, 4, 11, 25)&0.273996\\
44&(1, 2, 4, 11, 26)&0.274357\\
45&(1, 2, 5, 11, 26)&0.274862\\
46&(1, 2, 5, 11, 27)&0.275235\\
47&(1, 2, 5, 12, 27)&0.275654\\
48&(1, 2, 5, 12, 28)&0.276036\\
49&(1, 2, 5, 12, 29)&0.276277\\
50&(1, 2, 5, 13, 29)&0.276634\\ \hline
\end{array}
$$
\end{minipage}
\begin{minipage}{0.25\textwidth}
$$
\begin{array}{rrr}
\hline
n & \qquad \ \ (\ldots,b_2,b_1) & \qquad \quad f(n)\\ \hline
51&(1, 2, 5, 13, 30)&0.276896\\
52&(1, 2, 6, 13, 30)&0.277275\\
53&(1, 2, 6, 13, 31)&0.277550\\
54&(1, 2, 6, 14, 31)&0.277807\\
55&(1, 2, 6, 14, 32)&0.278094\\
56&(1, 3, 6, 14, 32)&0.278322\\
57&(1, 3, 6, 14, 33)&0.278618\\
58&(1, 3, 6, 14, 34)&0.278815\\
59&(1, 3, 6, 15, 34)&0.279103\\
60&(1, 3, 6, 15, 35)&0.279313\\
61&(1, 3, 7, 15, 35)&0.279525\\
62&(1, 3, 7, 15, 36)&0.279747\\
63&(1, 3, 7, 16, 36)&0.279962\\
64&(1, 3, 7, 16, 37)&0.280192\\
65&(1, 3, 7, 16, 38)&0.280344\\
66&(1, 3, 7, 17, 38)&0.280532\\
67&(1, 3, 7, 17, 39)&0.280698\\
68&(1, 3, 8, 17, 39)&0.280862\\
69&(1, 3, 8, 17, 40)&0.281038\\
70&(1, 3, 8, 18, 40)&0.281178\\
71&(1, 3, 8, 18, 41)&0.281363\\
72&(1, 3, 8, 18, 42)&0.281486\\
73&(1, 1, 3, 8, 18, 42)&0.281670\\
74&(1, 1, 3, 8, 18, 43)&0.281803\\
75&(1, 1, 3, 8, 19, 43)&0.281969\\
76&(1, 1, 3, 8, 19, 44)&0.282112\\
77&(1, 1, 4, 8, 19, 44)&0.282210\\
78&(1, 1, 4, 8, 19, 45)&0.282361\\
79&(1, 1, 4, 8, 20, 45)&0.282488\\
80&(1, 1, 4, 8, 20, 46)&0.282646\\
81&(1, 1, 4, 8, 20, 47)&0.282755\\
82&(1, 1, 4, 9, 20, 47)&0.282902\\
83&(1, 1, 4, 9, 20, 48)&0.283019\\
84&(1, 1, 4, 9, 21, 48)&0.283165\\
85&(1, 1, 4, 9, 21, 49)&0.283288\\
86&(1, 1, 4, 9, 22, 49)&0.283370\\
87&(1, 1, 4, 9, 22, 50)&0.283501\\
88&(1, 1, 4, 9, 22, 51)&0.283590\\
89&(1, 1, 4, 10, 22, 51)&0.283715\\
90&(1, 1, 4, 10, 22, 52)&0.283811\\
91&(1, 1, 4, 10, 23, 52)&0.283914\\
92&(1, 1, 4, 10, 23, 53)&0.284018\\
93&(1, 2, 4, 10, 23, 53)&0.284090\\
94&(1, 2, 4, 10, 23, 54)&0.284200\\
95&(1, 2, 4, 10, 24, 54)&0.284279\\
96&(1, 2, 4, 10, 24, 55)&0.284394\\
97&(1, 2, 4, 10, 24, 56)&0.284475\\
98&(1, 2, 4, 11, 24, 56)&0.284553\\
99&(1, 2, 4, 11, 24, 57)&0.284641\\
100&(1, 2, 4, 11, 25, 57)&0.284736\\
\hline
\end{array}
$$
\end{minipage}

\newpage

\begin{minipage}{0.5\textwidth}
$$
\begin{array}{rrr}
\hline
n & \qquad \quad  (\ldots,b_2,b_1) & \qquad \quad f(n)\\ \hline
101&(1, 2, 4, 11, 25, 58)&0.284828\\
102&(1, 2, 4, 11, 25, 59)&0.284891\\
103&(1, 2, 4, 11, 26, 59)&0.284978\\
104&(1, 2, 4, 11, 26, 60)&0.285046\\
105&(1, 2, 5, 11, 26, 60)&0.285148\\
106&(1, 2, 5, 11, 26, 61)&0.285222\\
107&(1, 2, 5, 11, 27, 61)&0.285289\\
108&(1, 2, 5, 11, 27, 62)&0.285368\\
109&(1, 2, 5, 12, 27, 62)&0.285434\\
110&(1, 2, 5, 12, 27, 63)&0.285518\\
111&(1, 2, 5, 12, 27, 64)&0.285577\\
112&(1, 2, 5, 12, 28, 64)&0.285657\\
113&(1, 2, 5, 12, 28, 65)&0.285720\\
114&(1, 2, 5, 12, 29, 65)&0.285766\\
115&(1, 2, 5, 12, 29, 66)&0.285834\\
116&(1, 2, 5, 13, 29, 66)&0.285892\\
117&(1, 2, 5, 13, 29, 67)&0.285965\\
118&(1, 2, 5, 13, 29, 68)&0.286015\\
119&(1, 2, 5, 13, 30, 68)&0.286074\\
120&(1, 2, 5, 13, 30, 69)&0.286129\\
121&(1, 2, 6, 13, 30, 69)&0.286201\\
122&(1, 2, 6, 13, 30, 70)&0.286261\\
123&(1, 2, 6, 13, 31, 70)&0.286306\\
124&(1, 2, 6, 13, 31, 71)&0.286369\\
125&(1, 2, 6, 13, 31, 72)&0.286413\\
126&(1, 2, 6, 14, 31, 72)&0.286472\\
127&(1, 2, 6, 14, 31, 73)&0.286519\\
128&(1, 2, 6, 14, 32, 73)&0.286576\\
129&(1, 2, 6, 14, 32, 74)&0.286628\\
130&(1, 3, 6, 14, 32, 74)&0.286667\\
131&(1, 3, 6, 14, 32, 75)&0.286723\\
132&(1, 3, 6, 14, 33, 75)&0.286768\\
133&(1, 3, 6, 14, 33, 76)&0.286827\\
134&(1, 3, 6, 14, 33, 77)&0.286868\\
135&(1, 3, 6, 14, 34, 77)&0.286910\\
136&(1, 3, 6, 14, 34, 78)&0.286955\\
137&(1, 3, 6, 15, 34, 78)&0.287007\\
138&(1, 3, 6, 15, 34, 79)&0.287056\\
139&(1, 3, 6, 15, 34, 80)&0.287089\\
140&(1, 3, 6, 15, 35, 80)&0.287140\\
141&(1, 3, 6, 15, 35, 81)&0.287177\\
142&(1, 3, 7, 15, 35, 81)&0.287221\\
143&(1, 3, 7, 15, 35, 82)&0.287262\\
144&(1, 3, 7, 15, 36, 82)&0.287303\\
145&(1, 3, 7, 15, 36, 83)&0.287346\\
146&(1, 3, 7, 16, 36, 83)&0.287380\\
147&(1, 3, 7, 16, 36, 84)&0.287427\\
148&(1, 3, 7, 16, 36, 85)&0.287459\\
149&(1, 3, 7, 16, 37, 85)&0.287508\\
150&(1, 3, 7, 16, 37, 86)&0.287544\\ \hline
\end{array}
$$
\end{minipage}
\begin{minipage}{0.5\textwidth}
$$
\begin{array}{rrr}
\hline
n & \qquad\qquad  (\ldots,b_2,b_1) & \qquad \quad f(n)\\ \hline
151&(1, 3, 7, 16, 38, 86)&0.287573\\
152&(1, 3, 7, 16, 38, 87)&0.287612\\
153&(1, 3, 7, 17, 38, 87)&0.287643\\
154&(1, 3, 7, 17, 38, 88)&0.287684\\
155&(1, 3, 7, 17, 38, 89)&0.287713\\
156&(1, 3, 7, 17, 39, 89)&0.287750\\
157&(1, 3, 7, 17, 39, 90)&0.287782\\
158&(1, 3, 8, 17, 39, 90)&0.287814\\
159&(1, 3, 8, 17, 39, 91)&0.287849\\
160&(1, 3, 8, 17, 40, 91)&0.287879\\
161&(1, 3, 8, 17, 40, 92)&0.287916\\
162&(1, 3, 8, 17, 40, 93)&0.287942\\
163&(1, 3, 8, 18, 40, 93)&0.287975\\
164&(1, 3, 8, 18, 40, 94)&0.288003\\
165&(1, 3, 8, 18, 41, 94)&0.288040\\
166&(1, 3, 8, 18, 41, 95)&0.288071\\
167&(1, 3, 8, 18, 42, 95)&0.288093\\
168&(1, 3, 8, 18, 42, 96)&0.288126\\
169&(1, 1, 3, 8, 18, 42, 96)&0.288155\\
170&(1, 1, 3, 8, 18, 42, 97)&0.288191\\
171&(1, 1, 3, 8, 18, 42, 98)&0.288216\\
172&(1, 1, 3, 8, 18, 43, 98)&0.288244\\
173&(1, 1, 3, 8, 18, 43, 99)&0.288272\\
174&(1, 1, 3, 8, 19, 43, 99)&0.288302\\
175&(1, 1, 3, 8, 19, 43, 100)&0.288332\\
176&(1, 1, 3, 8, 19, 44, 100)&0.288355\\
177&(1, 1, 3, 8, 19, 44, 101)&0.288387\\
178&(1, 1, 3, 8, 19, 44, 102)&0.288410\\
179&(1, 1, 4, 8, 19, 44, 102)&0.288432\\
180&(1, 1, 4, 8, 19, 44, 103)&0.288457\\
181&(1, 1, 4, 8, 19, 45, 103)&0.288486\\
182&(1, 1, 4, 8, 19, 45, 104)&0.288513\\
183&(1, 1, 4, 8, 20, 45, 104)&0.288533\\
184&(1, 1, 4, 8, 20, 45, 105)&0.288563\\
185&(1, 1, 4, 8, 20, 46, 105)&0.288586\\
186&(1, 1, 4, 8, 20, 46, 106)&0.288617\\
187&(1, 1, 4, 8, 20, 46, 107)&0.288640\\
188&(1, 1, 4, 8, 20, 47, 107)&0.288661\\
189&(1, 1, 4, 8, 20, 47, 108)&0.288686\\
190&(1, 1, 4, 9, 20, 47, 108)&0.288711\\
191&(1, 1, 4, 9, 20, 47, 109)&0.288737\\
192&(1, 1, 4, 9, 20, 47, 110)&0.288756\\
193&(1, 1, 4, 9, 20, 48, 110)&0.288782\\
194&(1, 1, 4, 9, 20, 48, 111)&0.288803\\
195&(1, 1, 4, 9, 21, 48, 111)&0.288832\\
196&(1, 1, 4, 9, 21, 48, 112)&0.288854\\
197&(1, 1, 4, 9, 21, 49, 112)&0.288876\\
198&(1, 1, 4, 9, 21, 49, 113)&0.288900\\
199&(1, 1, 4, 9, 21, 49, 114)&0.288917\\
200&(1, 1, 4, 9, 22, 49, 114)&0.288937\\\hline
\end{array}
$$
\end{minipage}

\newpage

\begin{minipage}{0.5\textwidth}
$$
\begin{array}{rrr}
\hline
n & \qquad \quad\qquad (\ldots,b_2,b_1) & \qquad \quad f(n)\\ \hline
201&(1, 1, 4, 9, 22, 49, 115)&0.288956\\
202&(1, 1, 4, 9, 22, 50, 115)&0.288982\\
203&(1, 1, 4, 9, 22, 50, 116)&0.289003\\
204&(1, 1, 4, 9, 22, 51, 116)&0.289019\\
205&(1, 1, 4, 9, 22, 51, 117)&0.289041\\
206&(1, 1, 4, 10, 22, 51, 117)&0.289061\\
207&(1, 1, 4, 10, 22, 51, 118)&0.289084\\
208&(1, 1, 4, 10, 22, 51, 119)&0.289101\\
209&(1, 1, 4, 10, 22, 52, 119)&0.289122\\
210&(1, 1, 4, 10, 22, 52, 120)&0.289141\\
211&(1, 1, 4, 10, 23, 52, 120)&0.289160\\
212&(1, 1, 4, 10, 23, 52, 121)&0.289180\\
213&(1, 1, 4, 10, 23, 53, 121)&0.289197\\
214&(1, 1, 4, 10, 23, 53, 122)&0.289219\\
215&(1, 1, 4, 10, 23, 53, 123)&0.289234\\
216&(1, 2, 4, 10, 23, 53, 123)&0.289251\\
217&(1, 2, 4, 10, 23, 53, 124)&0.289268\\
218&(1, 2, 4, 10, 23, 54, 124)&0.289289\\
219&(1, 2, 4, 10, 23, 54, 125)&0.289307\\
220&(1, 2, 4, 10, 24, 54, 125)&0.289320\\
221&(1, 2, 4, 10, 24, 54, 126)&0.289340\\
222&(1, 2, 4, 10, 24, 55, 126)&0.289358\\
223&(1, 2, 4, 10, 24, 55, 127)&0.289379\\
224&(1, 2, 4, 10, 24, 55, 128)&0.289394\\
225&(1, 2, 4, 10, 24, 56, 128)&0.289411\\
226&(1, 2, 4, 10, 24, 56, 129)&0.289428\\
227&(1, 2, 4, 11, 24, 56, 129)&0.289441\\
228&(1, 2, 4, 11, 24, 56, 130)&0.289460\\
229&(1, 2, 4, 11, 24, 57, 130)&0.289473\\
230&(1, 2, 4, 11, 24, 57, 131)&0.289492\\
231&(1, 2, 4, 11, 24, 57, 132)&0.289507\\
232&(1, 2, 4, 11, 25, 57, 132)&0.289526\\
233&(1, 2, 4, 11, 25, 57, 133)&0.289541\\
234&(1, 2, 4, 11, 25, 58, 133)&0.289558\\
235&(1, 2, 4, 11, 25, 58, 134)&0.289575\\
236&(1, 2, 4, 11, 25, 58, 135)&0.289587\\
237&(1, 2, 4, 11, 25, 59, 135)&0.289602\\
238&(1, 2, 4, 11, 25, 59, 136)&0.289615\\
239&(1, 2, 4, 11, 26, 59, 136)&0.289633\\
240&(1, 2, 4, 11, 26, 59, 137)&0.289648\\
241&(1, 2, 4, 11, 26, 60, 137)&0.289661\\
242&(1, 2, 4, 11, 26, 60, 138)&0.289676\\
243&(1, 2, 5, 11, 26, 60, 138)&0.289693\\
244&(1, 2, 5, 11, 26, 60, 139)&0.289710\\
245&(1, 2, 5, 11, 26, 60, 140)&0.289722\\
246&(1, 2, 5, 11, 26, 61, 140)&0.289738\\
247&(1, 2, 5, 11, 26, 61, 141)&0.289751\\
248&(1, 2, 5, 11, 27, 61, 141)&0.289764\\
249&(1, 2, 5, 11, 27, 61, 142)&0.289778\\
250&(1, 2, 5, 11, 27, 62, 142)&0.289792\\ \hline
\end{array}
$$
\end{minipage}
\begin{minipage}{0.5\textwidth}
$$
\begin{array}{rrr}
\hline
n & \qquad \quad\qquad (\ldots,b_2,b_1) & \qquad \quad f(n)\\ \hline
251&(1, 2, 5, 11, 27, 62, 143)&0.289807\\
252&(1, 2, 5, 11, 27, 62, 144)&0.289818\\
253&(1, 2, 5, 12, 27, 62, 144)&0.289833\\
254&(1, 2, 5, 12, 27, 62, 145)&0.289845\\
255&(1, 2, 5, 12, 27, 63, 145)&0.289862\\
256&(1, 2, 5, 12, 27, 63, 146)&0.289875\\
257&(1, 2, 5, 12, 27, 64, 146)&0.289885\\
258&(1, 2, 5, 12, 27, 64, 147)&0.289899\\
259&(1, 2, 5, 12, 28, 64, 147)&0.289912\\
260&(1, 2, 5, 12, 28, 64, 148)&0.289927\\
261&(1, 2, 5, 12, 28, 64, 149)&0.289938\\
262&(1, 2, 5, 12, 28, 65, 149)&0.289951\\
263&(1, 2, 5, 12, 28, 65, 150)&0.289964\\
264&(1, 2, 5, 12, 29, 65, 150)&0.289972\\
265&(1, 2, 5, 12, 29, 65, 151)&0.289985\\
266&(1, 2, 5, 12, 29, 66, 151)&0.289996\\
267&(1, 2, 5, 12, 29, 66, 152)&0.290010\\
268&(1, 2, 5, 12, 29, 66, 153)&0.290020\\
269&(1, 2, 5, 13, 29, 66, 153)&0.290033\\
270&(1, 2, 5, 13, 29, 66, 154)&0.290044\\
271&(1, 2, 5, 13, 29, 67, 154)&0.290058\\
272&(1, 2, 5, 13, 29, 67, 155)&0.290070\\
273&(1, 2, 5, 13, 29, 67, 156)&0.290078\\
274&(1, 2, 5, 13, 29, 68, 156)&0.290091\\
275&(1, 2, 5, 13, 29, 68, 157)&0.290100\\
276&(1, 2, 5, 13, 30, 68, 157)&0.290113\\
277&(1, 2, 5, 13, 30, 68, 158)&0.290124\\
278&(1, 2, 5, 13, 30, 69, 158)&0.290134\\
279&(1, 2, 5, 13, 30, 69, 159)&0.290146\\
280&(1, 2, 6, 13, 30, 69, 159)&0.290158\\
281&(1, 2, 6, 13, 30, 69, 160)&0.290171\\
282&(1, 2, 6, 13, 30, 69, 161)&0.290179\\
283&(1, 2, 6, 13, 30, 70, 161)&0.290192\\
284&(1, 2, 6, 13, 30, 70, 162)&0.290202\\
285&(1, 2, 6, 13, 31, 70, 162)&0.290211\\
286&(1, 2, 6, 13, 31, 70, 163)&0.290222\\
287&(1, 2, 6, 13, 31, 71, 163)&0.290233\\
288&(1, 2, 6, 13, 31, 71, 164)&0.290245\\
289&(1, 2, 6, 13, 31, 71, 165)&0.290253\\
290&(1, 2, 6, 13, 31, 72, 165)&0.290263\\
291&(1, 2, 6, 13, 31, 72, 166)&0.290272\\
292&(1, 2, 6, 14, 31, 72, 166)&0.290284\\
293&(1, 2, 6, 14, 31, 72, 167)&0.290294\\
294&(1, 2, 6, 14, 31, 73, 167)&0.290302\\
295&(1, 2, 6, 14, 31, 73, 168)&0.290313\\
296&(1, 2, 6, 14, 32, 73, 168)&0.290322\\
297&(1, 2, 6, 14, 32, 73, 169)&0.290334\\
298&(1, 2, 6, 14, 32, 73, 170)&0.290342\\
299&(1, 2, 6, 14, 32, 74, 170)&0.290353\\
300&(1, 2, 6, 14, 32, 74, 171)&0.290362\\\hline
\end{array}
$$
\end{minipage}
}

\begin{figure}
\begin{center}
\includegraphics[scale=0.85]{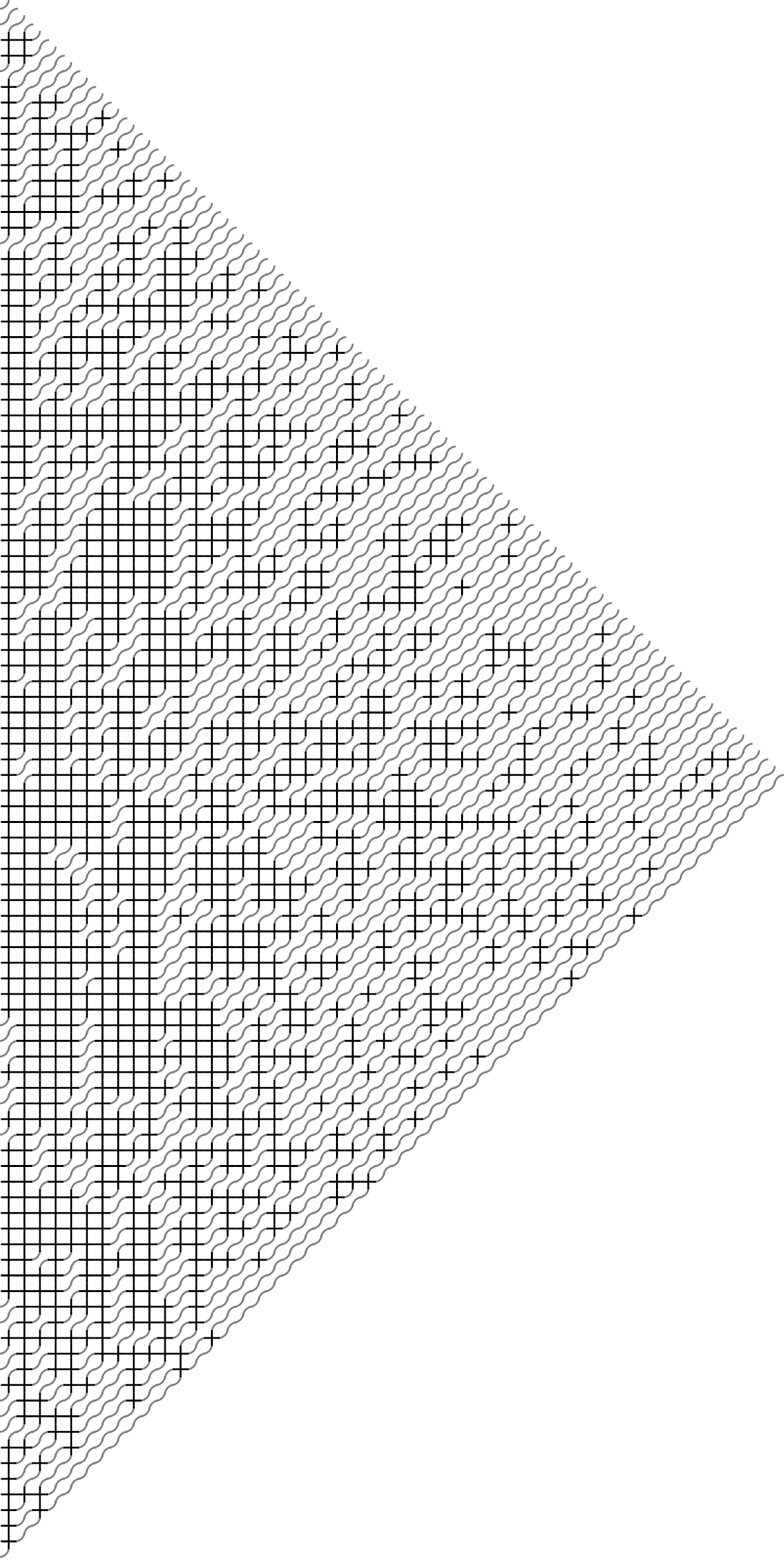}
\caption{Random double $rc$-graph corresponding to a permutation
in Figure~\ref{fig:random-permdrc}.}
\label{fig:random-permdrc-app}
\end{center}
\end{figure}

\end{document}